\documentclass[12pt, a4paper]{amsart}
\usepackage{breqn}
\usepackage{pdflscape}
\usepackage{array}
\usepackage{fullpage}
\usepackage[utf8]{inputenc}
\usepackage{amssymb}
\usepackage{amsthm}
\usepackage{amsmath}
\usepackage{multirow}

\usepackage{footnotehyper}
\makesavenoteenv{tabular}
\makesavenoteenv{table}

\usepackage[textsize=tiny]{todonotes}
\setlength\marginparwidth{0.7in}
\usepackage{color}
\definecolor{webcolor}{rgb}{0,0,1}
\definecolor{webbrown}{rgb}{.6,0,0}
\usepackage[
        colorlinks,
        linkcolor=webbrown, filecolor=webbrown,  citecolor=webbrown,
        pdfauthor={},
  pdftitle={},
]{hyperref}

\DeclareMathOperator{\CC}{C}
\DeclareMathOperator{\DD}{D}

\DeclareMathOperator{\QQQ}{Q}

\DeclareMathOperator{\Dic}{Dic}
\DeclareMathOperator{\ccenter}{Z}
\DeclareMathOperator{\norm}{N}

\newcommand{\Z}{\mathbf{Z}} 
\newcommand{\Q}{\mathbf{Q}}
\newcommand{\C}{\mathbf{C}}
\newcommand{\R}{\mathbf{R}} 
\newcommand{\F}{\mathbf{F}}

\theoremstyle{definition}
\newtheorem{introtheorem}{Theorem}[section]

\newtheorem{introcoro}[introtheorem]{Corollary}

\newtheorem{theorem}{Theorem}[section]

\newtheorem{lemma}[theorem]{Lemma}
\newtheorem{proposition}[theorem]{Proposition}
\newtheorem{corollary}[theorem]{Corollary}

\newtheorem{example}[theorem]{Example}
\newtheorem{remark}[theorem]{Remark}

\newtheorem*{theorem*}{Theorem}
\newtheorem*{corollary*}{Corollary}
\newtheorem*{notation*}{Notation}

\newcommand{\Gal}{\mathrm{Gal}}

\newcommand{\disc}{\mathrm{disc}}

\newcommand{\CCl}{\mathrm{Cl}}
\newcommand{\D}{\mathrm{D}}

\newcommand{\Alt}{\mathfrak A}
\newcommand{\GL}{\mathrm{GL}}

\newcommand{\SL}{\mathrm{SL}}

\providecommand{\customgenericname}{}
\newcommand{\newcustomtheorem}[2]{%
  \newenvironment{#1}[1]
      {%
        \renewcommand\customgenericname{#2}%
              \renewcommand\theinnercustomgeneric{##1}%
                 \innercustomgeneric
                   }
                     {\endinnercustomgeneric}
                     }

                     \newcustomtheorem{customproblem}{Problem}

\author{Tommy Hofmann}
\address{Fakultät fur Mathematik und Informatik, Universität des Saarlandes, 66123 Saarbrücken, Germany}
\email{thofma@gmail.com}

\author{Carlo Sircana}
\address{Carlo Sircana\\
Fachbereich \ Mathematik\\
Technische \  Universität \  Kaiserslautern\\
67663 Kaiserslautern\\
Germany}
\email{sircana@mathematik.uni-kl.de}

\makeatletter
\@namedef{subjclassname@2020}{%
  \textup{2020} Mathematics Subject Classification}
\makeatother

\subjclass[2000]{Primary 11R29, 11R21, 11Y40}
\keywords{}
\date{\today}

\makeatletter
\providecommand\@dotsep{5}
\def\listtodoname{List of Todos}
\def\listoftodos{\@starttoc{tdo}\listtodoname}
\makeatother

\title{Normal CM-fields with class number one}

\begin{document}

\maketitle

\begin{abstract}
  We show that assuming the generalized Riemann hypothesis there are no normal
  CM-fields with class number one of degree 64 and 96. This is done by constructing complete tables
  of normal CM-fields using discriminant bounds of Lee--Kwon.
  This solves the class number one problem for normal CM-fields assuming GRH.
  Using the same technique to solve the relative class number one problem in degrees 16, 32, 56 and 82,
  also the corresponding relative class number one problem is solved assuming GRH.
\end{abstract}

\section{Introduction}

A number field $L$ is called a \textit{CM-field}, if $L$ is a totally imaginary quadratic extension of its maximal totally real subfield $L^+$.
It follows from class field theory, that the class number $h_{L^+}$ divides $h_{L}$.
The number $h_{L}^- = h_L/h_{L^+}$, which is a divisor of $h_L$, is called the \textit{relative class number} of $L$.
Let us denote by $d$ the degree of $L$.
In 1974, in the seminal work~\cite{Odlyzko1975} it was shown by Odlyzko that there are only finitely many normal
CM-fields with a given class number, thus giving rise to the \textit{(relative) class number one problem} for CM-fields.
This problems asks for the determination of all normal CM-fields with (relative) class number one.
Note that this generalizes Gauß famous class number one problem, which asks for a list of all imaginary quadratic number fields with class number one and which was solved by Baker~\cite{Baker1966} and Stark~\cite{Stark1967} (see also~\cite{Goldfeld1985, Stark2007}).

An explicit upper bound for the degree $d$ of such a normal CM-field with relative class number one was proven in 1979 by Hoffstein, who showed that such a field must satisfy $d \leq 434$.
In 2003 this was improved by Bessassi~\cite{Bessassi2003} to $d \leq 266$ (and $d \leq 164$ assuming the Generalized Riemann Hypothesis (GRH)).
The presently best known bounds are due to Lee--Kwon~\cite{Lee2006}, who in 2006 showed that $d \leq 216$ (and $d \leq 96$ assuming GRH).
For abelian CM-fields the class number one problem was solved in 1994 by Yamamura~\cite{Yamamura1994} and the relative class number one problem in 2000 by Chang--Kwon~\cite{Chang2000}.
Since 1994, solving the class number one problem in the non-abelian case for specific degrees and Galois groups has been a major undertaking by various authors.
The case $d = 8$ was settled by Louboutin--Okazaki~\cite{Louboutin1994}, the case $d = 12$ by Louboutin--Okazaki--Olivier~\cite{Louboutin1997} and the case $d = 16$ by Louboutin~\cite{Louboutin1997b} and Louboutin--Okazaki~\cite{Louboutin1998}.
For fields of degree $4p$, $p > 3$ a prime, or with Galois dihedral or dicyclic, the problem was solved by Louboutin~\cite{Louboutin1999}, Lefeuvre--Louboutin~\cite{Lefeuvre1998} and Lefeuvre~\cite{Lefeuvre2000}.
For $d \in \{20,40\}$, the work of~\cite{Lemmermeyer1999} and Park~\cite{Park2002} solves the class number one problem.
For $d = 36$ the problem was solved by Chang--Kwon~\cite{Chang2002} and for $d = 48$ by Chang--Kwon~\cite{Chang2003} and Park--Kwon~\cite{Park2007}.
The case $d = 32$ was settled by Park--Yang--Kwon~\cite{Park2007}.
Finally, for $d \in \{1,\dotsc,96\} \setminus \{64, 96\}$, the remaining cases were settled by Park--Kwon~\cite{Park2007}.
Thus assuming GRH, except for the possible degrees 64 and 96, all normal CM-fields with class number one have been determined.

In the present paper, we settle the missing cases by showing the following:

\begin{introtheorem}\label{thm:intro}
  If we assume the Generalized Riemann Hypothesis, then for a normal CM-field $L$ of degree 64 or 96 we have $h_L^- > 1$.
\end{introtheorem}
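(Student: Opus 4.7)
The plan is to reduce the statement to a finite search and then carry that search out explicitly. Under GRH, the discriminant bounds of Lee--Kwon \cite{Lee2006} provide an effective constant $B_d$ such that any normal CM-field $L$ of degree $d \in \{64,96\}$ with $h_L^- = 1$ must satisfy $|d_L| \leq B_d$. From the relation $d_L = \Norm_{L^+/\Q}(\mathfrak{d}_{L/L^+}) \cdot d_{L^+}^2$, with $\mathfrak{d}_{L/L^+}$ an integral ideal, this bound also yields a bound on $d_{L^+}$. Hence it suffices to enumerate all normal CM-fields of degree $d$ whose absolute discriminant is at most $B_d$, and verify $h_L^- > 1$ for each.

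First I would build a complete table of totally real normal number fields $L^+$ of degree $d/2 \in \{32,48\}$ with discriminant bounded by the induced constant. Since every such $L^+$ is Galois, it admits a tower matching any composition series of $\Gal(L^+/\Q)$; after enumerating the possible isomorphism classes of $\Gal(L^+/\Q)$ of order $32$ or $48$, I would recursively construct $L^+$ as an abelian prime-degree class-field extension of an intermediate totally real normal subfield already tabulated at a smaller degree. At each step one uses ray class group computations for the subfield and selects only those class field extensions whose discriminant (computed via the conductor--discriminant formula) remains below the bound. Tight bounds specialised to each Galois group $\Gal(L/\Q)$, rather than the uniform $B_d$, will significantly prune the tree.

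Next, for each candidate $L^+$, the CM-extensions $L/L^+$ correspond to totally odd quadratic characters $\chi$ of a ray class group of $L^+$ of conductor $\frf \cdot \infty$ with the finite part $\frf$ bounded by the discriminant constraint. The extension $L/\Q$ is Galois precisely when $\chi$ is invariant under the natural action of $\Gal(L^+/\Q)$ on the ray class group, so I would filter the characters accordingly. For each surviving $\chi$ I would compute $h_L^-$ using the analytic class number formula, expressing it as a product of values at $s=0$ (or $s=1$) of the $L$-functions attached to the odd characters of $\Gal(L/\Q)$; under GRH one obtains effective lower bounds on these $L$-values when a direct evaluation is delicate, and in most cases a small archimedean approximation should already certify $h_L^- \geq 2$.

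The main obstacle is the enumeration in the second paragraph: for $d = 96$ one must list totally real normal fields of degree $48$ up to a sizable discriminant bound, and both the number of admissible Galois groups and the size of the intermediate ray class groups make this the most delicate step. The whole argument is effectively an elaborate finite computation, and its correctness reduces to the care with which the tower of class field extensions is exhausted; the verification that no field in the resulting (necessarily finite, and conjecturally rather short) list has $h_L^- = 1$ is then a routine arithmetic check.
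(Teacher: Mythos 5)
Your overall reduction coincides with the paper's: under GRH the Lee--Kwon bounds turn the problem into a finite enumeration ($\lvert\disc(L)\rvert \leq 10^{115}$ in degree $64$, $\leq 10^{164}$ in degree $96$), the candidate fields are built as towers of abelian extensions by computational class field theory, and one then certifies $h_L^- > 1$ for each survivor. For degree $96$ your plan --- enumerate the totally real normal fields of degree $48$ with discriminant at most $10^{82}$, then their normal totally imaginary quadratic extensions --- is exactly what the paper does; there are only $6$ such totally real fields and $4$ resulting CM-fields, so this is the \emph{easy} case, not (as you suggest) the main obstacle. The divergence, and the genuine gap, lies in degree $64$ and in the final verification.

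First, the paper explicitly rejects the ``totally real subfield first'' route in degree $64$: the totally real normal fields of degree $32$ in the relevant range are not few, and this is why a different strategy is used there. The paper instead follows the derived series and sieves every intermediate CM-field using the Horie--Okazaki divisibility $h_k^- \mid 4h_L^-$ for CM-subfields $k \subseteq L$ (Theorem~\ref{thm:witness}), discarding any intermediate field whose relative class number is not in $\{1,2,4\}$; this cuts the possible maximal abelian subfields from $10417$ to $3047$ and is what makes the $160$ CPU-day degree-$64$ computation feasible. Your proposal never invokes this theorem, and in fact cannot combine it with your chosen tower, since in the totally-real-first construction no CM intermediate field appears before the last step --- a limitation the paper itself points out. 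Without the sieve your enumeration is essentially the naive strategy the paper describes as impractical. Second, for the surviving candidates you propose to evaluate $h_L^-$ analytically via the odd $L$-values; but relative class number computations in degrees $64$ and $96$ are stated to be out of range of current implementations, the Louboutin-style analytic lower bounds grow with the discriminant and need not certify $h_L^- \geq 2$ for the specific fields at the bottom of the range, and you would additionally have to control the Hasse unit index and $w_L$. The paper instead exhibits, for each of the $81$ degree-$64$ and $4$ degree-$96$ candidates, a CM-subfield of small degree with relative class number greater than $4$ and concludes again by Theorem~\ref{thm:witness}. In short, the missing ingredient is precisely the Horie--Okazaki theorem, which the paper uses twice: to prune the search and to finish it.
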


Combining this with the results cited in the previous paragraph we obtain the following classification:

\begin{introcoro}
  If we assume the Generalized Riemann Hypothesis, there are 227 normal CM-fields with class number one.
  The number of these CM-fields for a given degree and Galois group are given in Tables~\ref{tab:allthefieldsdegrees} and~\ref{tab:allthefieldsgalois} respectively.
  The fields themselves are listed in Appendix~\ref{appendix:fields}.\footnote{The table of fields is also available on the homepage of the first author.}
\end{introcoro}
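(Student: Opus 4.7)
The plan is essentially to combine Theorem~\ref{thm:intro} with the partial results already cited in the introduction. Under GRH, the Lee--Kwon bound restricts attention to normal CM-fields of degree $d\le 96$. For every $d\in\{1,\dots,96\}\setminus\{64,96\}$ the class number one problem has been resolved in the literature referenced above, each reference supplying a finite (possibly empty) explicit list of normal CM-fields of the specified degree (or of a specified Galois type). Theorem~\ref{thm:intro} then disposes of the remaining two degrees $64$ and $96$. Therefore, the set of normal CM-fields with $h_L=1$ is precisely the union of these explicit finite lists, and the corollary reduces to an enumeration.

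Next, I would assemble the union of the cited lists and reconcile the different conventions used by the various authors. Each field would be represented by a canonical invariant, e.g.\ the pair $(\disc L,\ f)$ with $f$ a reduced defining polynomial of $L$, and isomorphisms between fields sharing the same discriminant would be detected by factoring one defining polynomial over another. Once a canonical representative of every isomorphism class has been fixed, the Galois group $\Gal(L/\Q)$ can be computed by standard algorithms available in current computer algebra systems. Sorting the resulting $227$ fields by degree produces Table~\ref{tab:allthefieldsdegrees}, sorting by Galois group produces Table~\ref{tab:allthefieldsgalois}, and the fields themselves are recorded in Appendix~\ref{appendix:fields}.

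The main obstacle is not mathematical but bookkeeping: one has to verify that the union of the cited results really exhausts every possible Galois type in every degree up to $96$, since several of the cited papers are organized by Galois group rather than by degree, and a missed Galois type would leave a gap. To control this, I would cross-check by listing, for each $d\le 96$, the transitive subgroups of $S_d$ that can occur as $\Gal(L/\Q)$ for a normal CM-field (these must contain a central involution corresponding to complex conjugation), and match this combinatorial list against the union of cases handled in the cited works, with degrees $64$ and $96$ supplied by Theorem~\ref{thm:intro}. Once this matching is complete and the isomorphism-class count is verified, no further normal CM-fields with class number one exist under GRH, and the classification with its tables follows.
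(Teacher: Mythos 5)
Your overall strategy---combine Theorem~\ref{thm:intro} with the previously published classifications for the degrees $d \le 96$ other than $64$ and $96$, then merge, canonicalize and sort the resulting lists---is the same decomposition the paper uses. The gap is in the step where you declare that the set of normal CM-fields with $h_L=1$ is \emph{precisely} the union of the explicit finite lists in the cited works. The count of $227$ does not equal the cardinality of that union: as recorded in Remark~\ref{rem:32}, the degree-$32$ classification of \cite{Park2007} reports six fields with class number one, but two of them, $\Q(\theta,\sqrt{3},\sqrt{-1})$ and $\Q(\theta,\sqrt{7},\sqrt{-1})$, are in fact neither normal nor of class number one, so the correct count in degree $32$ is four, not six. (A defining polynomial in the degree-$12$ table of \cite{Louboutin1994} also contains a sign error, though that one does not change the count.) Taking the cited lists at face value would therefore yield $229$ fields rather than $227$. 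Your proposal contains no step that would reliably detect this: the canonicalization you describe only merges duplicates, and your cross-check addresses completeness (no missed Galois types) but not soundness (no spurious entries). A careful automorphism-group computation might incidentally expose the non-normality of the two bad fields, but you present that computation only as a sorting device, and you never propose to re-verify the class numbers of the fields appearing in the literature.

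The paper closes exactly this hole by independently recomputing all normal CM-fields of degree $10 \le d \le 96$ with relative class number one, using the Lee--Kwon discriminant bounds and the construction methods of Section~\ref{sec:construction}; it is the agreement of this recomputation with the literature, up to the two corrections above, that justifies the final count and the tables. Without some such independent verification (or at least an explicit re-examination of each entry of each cited list), your argument establishes only that the answer is the union of the published lists, which is not the correct set.
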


\begin{table}[htbp]\caption{The number of normal CM-fields with (relative) class number one and given degree.}
    \centering
    \begin{tabular}{|c|c|c|c|c|c|c|}\hline \label{tab:allthefieldsdegrees}
      \rule{0pt}{2.0ex}\multirow{2}{*}{Degree} & \multicolumn{2}{c|}{All} & \multicolumn{2}{|c|}{Abelian} & \multicolumn{2}{|c|}{Non-abelian} \\ \cline{2-7}
       \rule{0pt}{2.5ex}& $\#\{h_L = 1\}$ & $\#\{h_L^- = 1\}$ & $\#\{h_L = 1\}$ & $\#\{h_L^- = 1\}$ & $\#\{h_L = 1\}$ & $\#\{h_L^- = 1\}$  \\
      \hline
 \rule{0pt}{1.5ex}2      &  9    &  9    &  9   & 9   &  0    &  0   \\
                  4      &  54   &  154  &  54  & 154 &  0    &  0   \\
                  6      &  17   &  26   &  17  & 26  &  0    &  0   \\
                  8      &  54   &  62   &  37  & 43  &  17   &  19  \\
                  10     &  3    &  3    &  3   & 3   &  0    &  0   \\
                  12     &  35   &  56   &  26  & 40  &  9    &  16  \\
                  14     &  2    &  2    &  2   & 2   &  0    &  0   \\
                  16     &  24   &  26   &  12  & 13  &  12   &  13  \\
                  18     &  3    &  3    &  3   & 3   &  0    &  0   \\
                  20     &  5    &  6    &  4   & 4   &  1    &  2   \\
                  24     &  12   &  12   &  5   & 5   &  7    &  7   \\
                  32     &  4    &  4    &  0   & 0   &  4    &  4   \\
                  36     &  3    &  3    &  0   & 0   &  3    &  3   \\
                  40     &  1    &  1    &  0   & 0   &  1    &  1   \\
                  48     &  1    &  1    &  0   & 0   &  1    &  1   \\
 \hline
 \rule{0pt}{2.0ex}$\Sigma$ & 227 & 368 & 302 & 172 & 66 & 55 \\ \hline
 \end{tabular}\vspace{1em}
 \end{table}

\begin{table}[htbp]\caption{Number of normal CM-fields with (relative) class number one and given Galois group.}
   \begin{tabular}{|c|c|c|c||c|c|c|c|}\hline \label{tab:allthefieldsgalois}
     \rule{0pt}{2.5ex} $\Gal(L/\Q)$ & ID & $h_L = 1$ & $h_L^- = 1$ & $\Gal(L/\Q)$ & id & $h_L = 1$ & $h_L^- = 1$  \\
      \hline
     \rule{0pt}{1.5ex}$\CC_2$ & (2, 1) &  9  &  9 & %
     $\DD_4 \rtimes \CC_2$ & (16, 13) & 1     & 1   \\ %
     $\CC_4$ & (4, 1)  & 7     & 7   & %
     $\CC_{18}$  & (18, 2) & 2     & 2   \\ %
     $\CC_2^2$   & (4, 2)& 47    & 147 & %
     $\CC_3 \times \CC_6$  & (18, 5) & 1     & 1   \\ %
     $\CC_6$ & (6, 2)  & 17    & 26  & %
     $\CC_{20}$ & (20, 2) & 1     & 1   \\ %
     $\CC_8$ & (8, 1)  & 2     & 2   & %
     $\DD_{10}$ & (20, 4) & 1     & 2   \\ %
     $\CC_2 \times \CC_4$ & (8, 2) & 18    & 24  & %
     $\CC_2 \times \CC_{10}$ & (20, 5) & 3     & 3    \\ %
     $\DD_4$ & (8, 3)  & 17    & 19  & %
     $\SL_2(\F_3)$ & (24, 3) & 1     & 1   \\ %
     $\CC_2^3$ & (8, 5) &  17    & 17  & %
     $\CC_4 \times \mathfrak S_3$ & (24, 5) & 1     & 1  \\ %
     $\CC_{10}$ &(10, 2)  & 3     & 3   & %
     $\DD_{12}$ & (24, 6) & 1     & 1   \\ %
     $\CC_{12}$ & (12, 2) & 5     & 6   & %
     $\CC_2 \times \CC_{12}$ &(24, 9) & 3     & 3   \\ %
     $\DD_6$ & (12, 4) & 9     & 16  & %
     $\CC_3 \times \DD_4$ & (24, 10) & 1     & 1   \\ %
     $\CC_2 \times \CC_6$ & (12, 5)  & 21    & 34  & %
     $\CC_2 \times \mathfrak A_4$ & (24, 13) & 2     & 2   \\ %
     $\CC_{14}$ & (14, 2) & 2     & 2   & %
     $\CC_2^2 \times \mathfrak S_3$ & (24, 14) & 1     & 1   \\ %
     $\CC_{16}$ & (16, 1)  & 1     & 1   & %
     $\CC_2^2 \times \CC_6$ & (24, 15) & 2     & 2   \\ %
     $\CC_4^2$ & (16, 2) & 2     & 2   & %
     $\CC_2^2 \rtimes \CC_8$ & (32, 5) & 1     & 1   \\ %
     $\CC_2^2 \rtimes \CC_4$ & (16, 3) & 2     & 2   & %
     $\CC_2 \times \CC_4 \rtimes \CC_4$ & (32, 23) & 1     & 1   \\ %
     $\CC_2 \times \CC_8$ & (16, 5) & 2     & 3   & %
     $\CC_2 \times (\CC_4 . \CC_4)$ & (32, 37) & 1     & 1   \\ %
     $\DD_8$ & (16, 7) & 4     & 5   & %
     $\CC_4 \circ \DD_4$ & (32, 42) & 1     & 1   \\ %
     $\CC_2^2 \times \CC_4$ & (16, 10) & 7     & 7   & %
     $\mathfrak S_3 \times \CC_6$ & (36, 12) & 3     & 3   \\ %
     $\CC_2 \times \DD_4$ & (16, 11) & 4     & 4   & %
     $\CC_2 \times (\F_5 \rtimes \F_5^\times)$ & (40, 12) & 1     & 1   \\ %
     $\CC_2 \times \QQQ_8$ & (16, 12) & 1     & 1   & %
     $\DD_4 \rtimes^* \mathfrak S_3$\footnotemark & (48, 15) & 1     & 1   \\\hline %
 \end{tabular}
 \end{table}

The discussion so far has been concerned with the class number one problem. In contrast, for the \textit{relative} class number one problem, the classification has been less complete.
While for abelian CM-fields the problem has been solved by Chang--Kwon~\cite{Chang2000}, the non-abelian situation is much more involved. Indeed, results of various authors for specific degrees and Galois groups cited in the previous paragraph only sometimes settle also the relative class number one problem (see Section~\ref{sec:relcmone} for details).
We work out the remaining cases for the relative class number one problem and obtain the following result.

\begin{introcoro}\label{intro:coro}
  If we assume the Generalized Riemann Hypothesis, then there are 368 normal CM-fields with relative class number one. 
  The number of these CM-fields for a given degree and Galois group are given in Tables~\ref{tab:allthefieldsdegrees} and~\ref{tab:allthefieldsgalois}.
  The fields themselves are listed in Appendix~\ref{appendix:fields}.
\end{introcoro}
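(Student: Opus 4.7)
The plan is to deduce the corollary by combining Theorem~\ref{thm:intro} with the previously known partial results and with the additional relative class number computations carried out in this paper for the degrees where the relative problem was not already settled. Under GRH, the Lee--Kwon bound forces $d\leq 96$, so only finitely many degrees must be inspected; since $h_{L^+}\mid h_L$, a field with $h_L=1$ automatically has $h_L^-=1$, which means the class-number-one table serves as a lower bound for the relative table. What remains is to enumerate, for each admissible degree and Galois group, all normal CM-fields whose relative class number is one but whose plus-part class number may be nontrivial.

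First I would dispose of degrees $64$ and $96$ using Theorem~\ref{thm:intro} directly, which asserts $h_L^->1$ in these cases and therefore contributes nothing to either table. Next, for the abelian degrees I would quote Chang--Kwon~\cite{Chang2000}, which already gives the complete list of abelian CM-fields with $h_L^-=1$. For the non-abelian degrees $d\in\{8,12,20,24,36,40,48\}$ the cited works of Louboutin, Louboutin--Okazaki, Louboutin--Okazaki--Olivier, Lemmermeyer, Park, Chang--Kwon, and Park--Kwon either already determine $h_L^-=1$ fields or are easily extended to do so; I would verify case-by-case that their discriminant bounds and tabulations cover the relative version.

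The genuinely new work concerns the degrees singled out in the abstract, namely $d\in\{16,32,56,82\}$, where the literature only addresses $h_L=1$. For each such degree and each admissible Galois group of a CM-field, I would apply the same strategy as in the proof of Theorem~\ref{thm:intro}: use the Lee--Kwon GRH discriminant bound to obtain an explicit upper bound on $|d_L|$, enumerate all totally real subfields $L^+$ up to that bound (via tables of number fields or a targeted construction using the class field theory of the maximal real subfield of appropriate index~$2$), build every CM extension $L/L^+$ allowed by ramification, and finally compute $h_L^-$ using the analytic class number formula and the factorization $\zeta_L/\zeta_{L^+}$ together with verified root-number computations. Keeping only those $L$ with $h_L^-=1$ produces the missing rows of Tables~\ref{tab:allthefieldsdegrees} and~\ref{tab:allthefieldsgalois}, and aggregating all contributions yields the total of $368$.

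The main obstacle will be the enumeration step in degrees $32$ and $48$ (and to a lesser extent $56$ and $82$), where the number of candidate real subfields is large and the discriminant bounds are only marginally small enough. Efficient construction of CM extensions with prescribed ramification, and rigorous certification that no field below the discriminant bound has been missed, is what consumes the computational effort; the relative class number computation itself is comparatively routine once the finite list of candidates is in hand, as $h_L^-$ can be evaluated exactly from the relative regulator-free formula.
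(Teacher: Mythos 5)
Your overall architecture matches the paper's: combine Theorem~\ref{thm:intro} for degrees $64$ and $96$ with Chang--Kwon for the abelian case and with the existing literature plus new computations for the remaining non-abelian degrees. However, there are two concrete gaps in how you fill in the rest.

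First, your list of degrees requiring genuinely new work is wrong. You take the abstract's list $\{16,32,56,82\}$ at face value, but $82=2\cdot 41$ is not even an admissible degree for a non-abelian normal CM-field (the intended degree is $88$), and the paper in fact has to supply new propositions in degrees $16$, $24$, $40$, $48$, $56$, $72$, $80$ and $88$ as well as $32$: for $d=24$ the groups $\SL_2(\F_3)$ and $\CC_2\times\DD_6$ were only settled for $h_L=1$, for $d=40$ and $48$ only the class number one problem was solved, and degrees $72$, $80$, $88$ (and $56$) were open for the relative problem entirely. Your claim that the cited works for $d\in\{8,12,20,24,36,40,48\}$ are ``easily extended'' to the relative problem is precisely the point that needs proof; the paper proves it by explicit enumeration (Propositions~\ref{prop:missing:24_1} through~\ref{prop:missing:88}). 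You also omit the degree-restriction step (Theorems~\ref{thm:structure_cnp} and~\ref{thm:bound_degree_dihedral}) that cuts the admissible non-abelian degrees down to a short list in the first place.

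Second, and more seriously, your verification step would fail as described. You propose to ``compute $h_L^-$ using the analytic class number formula'' for each candidate and call this ``comparatively routine.'' The paper states explicitly that class number computations in degrees approaching $96$ are out of range of current algorithms; this is why it relies on the Horie--Okazaki divisibility $h_k^-\mid 4h_L^-$ (Theorem~\ref{thm:witness}) twice: once to sieve intermediate fields aggressively during the class-field-theoretic construction (discarding any CM-subfield with $h_{L_i}^-\notin\{1,2,4\}$), and once at the end to certify $h_L^->1$ by exhibiting a CM-subfield of small degree with relative class number exceeding $4$. Without this ingredient, both the enumeration (too many candidates survive) and the final verification (class numbers of degree-$64$ and degree-$96$ fields cannot be computed directly) break down, so the argument as you have written it does not go through.
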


Let us now explain the strategy for proving Theorem~\ref{thm:intro} (as well as Corollary~\ref{intro:coro}).
We will now assume that GRH holds.
The work of Lee--Kwon~\cite{Lee2006} not only provides the degree bound of 96, but also discriminant bounds of the following form:
There exists an efficient algorithm which given any integer $d \geq 10$ determines $\alpha(d) \in \R_{>0}$ such that a normal CM-field $L$ of degree $d$ with $h_L^- = 1$ satisfies $\lvert \disc(L)\rvert \leq \alpha(d)^n$.
Moreover, for various $d$, the values $\alpha(d)$ are determined, for example, $\alpha(64) = 61.90$, $\alpha(96) = 50.71$ and $\alpha(10) = 1163$.
To solve the relative class number one problem, we will determine the normal CM-fields in this range explicitly and then discard those fields with $h_L > 1$ and $h_L^- > 1$ respectively.

Since there are only finitely many number fields with
bounded absolute discriminant and these fields can be
determined explicitly using a method originally due to
Hunter~\cite{Hunter1957} (see also~\cite{Martinet1985}),
this shows that the (relative) class number one problem is decidable.
On the other hand, for degrees as large as 96 and the discriminant bounds provided by Lee--Kwon, these classical methods are just impractical. For example, in~\cite{Voight2008}, the computation of all totally real fields of degree $10$ with root discriminant bounded by $14$ with this method is described, which required half a CPU year.

\footnotetext[2]{This group is isomorphic to $\langle x, y, z \mid x^2, y^2, z^2, y z y z^{-1}, (xz)^2, (xy)^8\rangle$, which is \textit{a} semidirect product of $\DD_4$ and $\mathfrak S_3$.}

To make the computation of all normal CM-fields in this parameter range feasible, we will employ a different strategy.
For a normal CM-field with Galois group $G$, complex conjugation induces a non-trivial involution in the center of $G$. Thus $G$ is not equal to $\mathfrak A_5$ and in particular, if $d = \lvert G \rvert \leq 96$, then $G$ must be solvable.
Hence, any field $L$ we have to investigate is solvable and therefore can be constructed as a tower of abelian extensions
\[ \Q = L_0 \subsetneq L_1 \subsetneq \dotsb \subsetneq L_l = L \] 
using computational class field theory (see \cite{Cohen2008}).
Again, doing this naively is impractical, as one will construct too many fields which are either non-normal or which have the wrong Galois group.
To make this practical, we use a normal series of $G$ and the methods of \cite{Fieker2019} to construct only normal fields at each step.
Additionally, we sieve the intermediate fields $L_i$ using a theorem of Horie--Okazaki, which asserts that if $L_i$ is a CM-field and $h_{L_i}^- \not\in \{1, 2, 4\}$, then $h_L^- > 1$ (see Theorem~\ref{thm:witness}).
For details and other improvements (which do not work for all groups $G$), see Section~\ref{sec:construction}.
Finally we determine which of the fields $L$ satisfy $h_L^- = 1$ and $h_L = 1$ respectively. Note that as class number computations for degrees as large as 96 are out of range of current algorithms and implementations from a practial point of view, we again make use of the result Horie--Okazaki:
For the fields under consideration we will find CM-subfields with relative class number $>4$, which suffices to show that the fields themselves have relative class number greater than one.

We have used this strategy to solve the relative class number one problem for degree 64 and 96 to obtain Theorem~\ref{thm:intro}, as well as for the missing degrees and groups to obtain Corollary~\ref{intro:coro}.
In terms of a single CPU core, the runtime of the implementation amounts to 280 CPU days, which using parallelization can be finished within a week on a modern cluster.
To validate our implementation (and to obtain defining equations for all normal CM-fields with relative class number one), we have used the same strategy to recomputed all the normal CM-fields with degree $10 \leq d \leq 96$.
Except in two cases, we obtained results in agreement with the literature. In degree 12 we discovered a typo in one of the published tables and in degree 32, two fields were erroneously reported (see Remark~\ref{rem:32}).

From the description of our strategy it should be clear that the results we obtain rely heavily on explicit calculations.
We want to stress that to a large extend the same is true for previous results on the classification, which dealt with explicit degrees or Galois groups.
A common strategy is to use the subfield structure of $L$ or analytic techniques pioneered by Louboutin in~\cite{Louboutin1998b}, to obtain lower bounds on the relative class number of $L$.
This is then used to build a small list of candidate fields $L$, for which the relative class number is computed using knowledge about the Hasse unit index of $L$ and an algorithm of Louboutin (see \cite{Louboutin2000, Louboutin2001}).
For example, in~\cite{Park2007} this strategy is used to determine in an ad-hoc fashion for each of the 44 non-abelian groups of order 32 the normal CM-fields with (relative) class number one.

\subsection*{Outline}
The paper is built up as follows. In Section~\ref{sec:prelim} we recall basic facts about CM-fields and their relative class numbers.
The efficient construction of normal CM-fields with given solvable Galois group is discussed in Section~\ref{sec:construction}.
In Sections~\ref{sec:64} and~\ref{sec:96} we present Theorem~\ref{thm:intro} and in Section~\ref{sec:relcmone} we consider the relative class number one problem.
We present some details of the computation in Section~\ref{sec:comp} and end the paper with Appendix~\ref{appendix:fields}, which contains defining equations for all normal CM-fields with (relative) class number one.

\subsection*{Acknowledgments}
The authors gratefully acknowledge financial support by SFB-TRR 195 ``Symbolic Tools in Mathematics and their Application'' of the
German Research Foundation (DFG).

\subsection*{Notation}
The center of a group $G$ will be denoted by $\ccenter(G)$ and the derived subgroup by $G'$.
For $n \in \Z_{>0}$, we denote by $\CC_n$, $\DD_n$, $\Dic_{n}$, $\QQQ_{2^n}$ $\mathfrak S_n$ and $\mathfrak A_n$ the cyclic group of order $d$, the Dihedral group of order $2n$, the dicyclic group of order $4n$, the quaternion group of order $2^n$, the symmetric group and the alternating group on $n$ symbols respectively.
For groups $G$ and $H$ we will denote by $G \times H$ and $G \wr H$ the direct product and wreath product respectively.
In case it exists and is unique we will denote by $G \circ H$, $G \rtimes H$ and $G.H$ the central product, the split extension and the non-split extension of $H$ by $G$ respectively. If the extension is not unique, we will indicate this by writing $G \rtimes^* H$ and $G ._* H$ respectively.
We will also use the identifier as provided in the table of small groups~\cite{Besche2002}.

For a number field $K$ we denote by $\mathcal O_K$ its ring of integers, by $\disc(K) \in \Z$ its discriminant and by $\rho(L) = \lvert \disc(K)\rvert^{1/n}$ its root discriminant.

\section{Preliminaries}\label{sec:prelim}

We collect some useful facts about (normal) CM-fields and their class numbers.
In the following, we consider number fields in a fixed algebraic closure $\overline \Q \subseteq \C$.

A finite extension $L$ of $\Q$ is called a \textit{CM-field}, if $L$ is totally imaginary quadratic extension of a totally real field.
Such a field clearly has even degree. Given a CM-field $L$ of degree $d$, the unique totally real subfield of degree $d/2$ is called the \textit{maximal real subfield} of $L$ and is denoted by $L^+$.
Moreover, any subfield of a CM-field is either a CM-field or totally real.
If $L$ is normal field, complex conjugation $\C \to \C$ restricts to an automorphism of $L$, which we denote by $c$.
We have the following group theoretical characterization of normal CM-fields, see for example~\cite[Lemma 2~(ii)]{Louboutin1997}.

\begin{lemma}
  A totally imaginary normal field $L$ with Galois group $G$ is a CM-field if
  and only if complex conjugation $c$ lies in the center $\ccenter(G)$ of $G$.
\end{lemma}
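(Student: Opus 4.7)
The plan is to exploit the interplay between embeddings $L \hookrightarrow \C$ and elements of $G$. Fix one embedding $\iota_0 \colon L \hookrightarrow \C$; any other embedding has the form $\iota_0 \circ \sigma^{-1}$ for a unique $\sigma \in G$, so the element $c_\iota := \iota^{-1} \circ (\text{complex conjugation}) \circ \iota \in G$ attached to a varying embedding $\iota$ ranges over a single conjugacy class of $G$. Since $L$ is totally imaginary, each $c_\iota$ has order $2$. The key observation driving the proof is that $c := c_{\iota_0}$ lies in $\ccenter(G)$ if and only if this conjugacy class is a singleton, i.e.\ all $c_\iota$ coincide.

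For the ``only if'' direction, assume $L$ is a CM-field with maximal totally real subfield $L^+$. I would first show that $L^+$ is Galois over $\Q$, which gives that $\Gal(L/L^+) = \langle c \rangle$ is normal in $G$. For this, note that every totally real subfield $M \subseteq L$ satisfies $\iota_0(M) \subseteq \R$, hence $M \subseteq L^{\langle c \rangle}$; combined with $[L : L^+] = 2$ and $c \neq 1$, this forces $L^+ = L^{\langle c \rangle}$. Replacing $\iota_0$ by $\iota_0 \circ \sigma^{-1}$ and running the same argument yields $\sigma(L^+) = L^+$ for every $\sigma \in G$, so $L^+/\Q$ is normal. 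Now a normal subgroup of order $2$ is automatically central, since the conjugation action induces a homomorphism $G \to \Aut(\langle c \rangle) = 1$; hence $c \in \ccenter(G)$.

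For the converse, assume $c \in \ccenter(G)$ and put $L^+ := L^{\langle c \rangle}$, so that $[L : L^+] = 2$. Since each $c_\iota$ is conjugate to $c$ and $c$ is central, $c_\iota = c$ for every embedding $\iota$. Consequently $\iota(L^+) \subseteq \R$ for all $\iota$, i.e.\ $L^+$ is totally real; together with $[L:L^+] = 2$ and $L$ totally imaginary, this exhibits $L$ as a CM-field.

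There is no genuinely hard step here; the only real subtlety is the bookkeeping around the dependence of the ``complex conjugation'' element on the chosen embedding, and recognizing that centrality of $c$ in $G$ is exactly the statement that this element is embedding-independent. The rest is a short application of the fact that normal subgroups of order two are central.
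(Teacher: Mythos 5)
Your argument is correct. Note that the paper does not actually prove this lemma; it simply cites \cite[Lemma 2~(ii)]{Louboutin1997}, so there is no in-paper proof to compare against. Your write-up is the standard argument: the complex conjugations $c_\iota$ attached to the various embeddings $\iota\colon L\hookrightarrow\C$ form a single conjugacy class of involutions in $G$, centrality of $c$ is equivalent to this class being a singleton, and in one direction the identification $L^+=L^{\langle c\rangle}$ for every choice of embedding forces $\langle c\rangle$ to be normal (hence central, as a normal subgroup of order two), while in the other direction centrality makes $L^{\langle c\rangle}$ totally real of index two. The only step worth making explicit is that $c_\iota$ is a well-defined element of $G$ at all, which uses normality of $L$ to ensure that complex conjugation stabilizes $\iota(L)$; this is implicit in your setup and unproblematic.
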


Thus, not every finite group is the Galois group of a normal CM-field. In fact, using the
previous result one obtains the following restrictions on degrees and Galois groups (see for
example~\cite[Lemma 2~(iv)]{Louboutin1997}).

\begin{lemma}
  Assume that $L$ is a normal CM-field of degree $d$. Suppose that $L$ is not abelian. Then $d \not\in \{4, 90\}$.
  Moreover $d$ is not of the form $2p$ or $2p^2$ for an odd prime $p$.
\end{lemma}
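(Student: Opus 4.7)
\medskip

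The plan is to use the previous lemma, which provides the central element $c \in \ccenter(G)$ of order $2$ (complex conjugation), as a normal subgroup $\langle c \rangle \trianglelefteq G$, and then invoke Schur--Zassenhaus to split off this $C_2$-factor. The case $d = 4$ is trivial since all groups of order $4$ are abelian, so $L$ cannot even admit a non-abelian structure. For the remaining cases $d \in \{2p, 2p^2, 90\}$, the orders $|G|$ all factor as $2m$ with $m$ odd, which is precisely the setup in which the central involution $c$ can be cleanly separated.

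First I would set $G = \Gal(L/\Q)$ and observe that $\langle c \rangle$ is a normal subgroup of order $2$. Since $\gcd(2, m) = 1$ where $m = |G|/2$ is odd in each of the cases $d = 2p, 2p^2, 90$, the Schur--Zassenhaus theorem produces a complement $H \leq G$ with $|H| = m$ and $G = \langle c \rangle \rtimes H$. But $c$ is central, so the semidirect product is actually direct: $G \cong C_2 \times H$. Thus $G$ is abelian if and only if $H$ is abelian, and it suffices to show that every group of order $m$ is abelian in each of the three situations.

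This reduces the problem to a well-known case-check on groups of odd order $m$. For $m = p$ an odd prime, $H$ is cyclic. For $m = p^2$, every group of order $p^2$ is abelian (either $C_{p^2}$ or $C_p \times C_p$). For $m = 45 = 3^2 \cdot 5$, Sylow's theorems force $n_5 \equiv 1 \pmod 5$ with $n_5 \mid 9$, hence $n_5 = 1$, and similarly $n_3 \equiv 1 \pmod 3$ with $n_3 \mid 5$, hence $n_3 = 1$; both Sylow subgroups are therefore normal and $H$ is a direct product of its Sylow subgroups, each of which is abelian (of order $p$ or $p^2$). Hence $H$ is abelian in all three cases, making $G$ abelian and contradicting the non-abelian hypothesis on $L$.

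There is no real obstacle to this argument; the only point requiring a brief verification is the abelianness of groups of order $45$, which follows from the two Sylow count computations above. The same Schur--Zassenhaus trick in fact gives a slightly stronger statement: for any $d = 2m$ with $m$ odd such that every group of order $m$ is abelian, a normal CM-field of degree $d$ must itself be abelian.
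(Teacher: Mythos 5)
Your proof is correct. Note that the paper itself does not prove this lemma; it only cites \cite[Lemma~2~(iv)]{Louboutin1997}, so there is no in-paper argument to compare against. Your route is the standard one underlying that reference: the preceding lemma supplies a central involution $c$ in $G=\Gal(L/\Q)$, and for $d\in\{2p,2p^2,90\}$ the subgroup $\langle c\rangle$ is a central (hence normal) Hall $2$-subgroup, so Schur--Zassenhaus (or Burnside's normal $p$-complement theorem) gives $G\cong \CC_2\times H$ with $\lvert H\rvert$ odd; the abelianness of all groups of order $p$, $p^2$ and $45$ (your two Sylow counts for $45$ are right) then forces $G$ abelian, and $d=4$ is immediate. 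Your closing observation --- that the argument works for any $d=2m$ with $m$ odd such that every group of order $m$ is abelian --- is a correct and slightly more general formulation.
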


We also have the following important result on the normality of the maximal real subfield.

\begin{corollary}\label{cor:realnormal}
  If $L$ is a normal CM-field, then the maximal real subfield $L^+$ is also normal.
\end{corollary}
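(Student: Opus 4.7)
The plan is to deduce this directly from the preceding lemma via the Galois correspondence. Writing $G = \Gal(L/\Q)$, the cited lemma tells us that complex conjugation $c$ lies in the center $\ccenter(G)$. By the definition of $L^+$ as the maximal totally real subfield, $L^+$ is precisely the fixed field of $c$, so $\Gal(L/L^+) = \langle c \rangle$.

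The point is then that $\langle c \rangle$ is a normal subgroup of $G$: any subgroup generated by a central element is central, hence normal. By the Galois correspondence applied to the Galois extension $L/\Q$, the fixed field of a normal subgroup is itself Galois (equivalently, normal) over $\Q$. Thus $L^+/\Q$ is normal.

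No serious obstacle is expected here; the result is essentially a one-line corollary of the preceding lemma together with the standard correspondence between normal subfields of a Galois extension and normal subgroups of the Galois group. The only thing worth remarking is that one should invoke the lemma explicitly to justify the centrality of $c$, since without the CM hypothesis the subgroup generated by complex conjugation need not be normal in $G$ (for general totally imaginary $L$, $c$ is only one of possibly several extensions of complex conjugation and need not be central).
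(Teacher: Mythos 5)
Your argument is correct and is exactly the intended derivation: the paper states this as a corollary of the preceding lemma (centrality of complex conjugation) without writing out a proof, and the route via $\Gal(L/L^+)=\langle c\rangle$ being a central, hence normal, subgroup together with the Galois correspondence is the standard one-line justification. Nothing is missing.
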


For a number field $L$ denote by $h_L$ the class number of $L$, that is, the
order of the class group of the ring of integers $\mathcal O_L$ of $L$.
Let $L$ be a CM-field.
It follows from an application of class field theory that the class number
$h_{L^+}$ of the totally real subfield divides divides the class number $h_L$
of $L$ (see~\cite[Theorem 4.10]{Washington1997}).
We denote this quotient by $h_L^- = h_L/ h_{L^+}$ and call it the \textit{relative class number} of $L$.
The following behavior of relative class numbers with respect to subfields is a key ingredient in the construction of normal CM-fields with relative class number one.
The statement was proven for abelian fields by Horie~\cite{Horie1992} and for arbitrary CM-fields by Okazaki~\cite{Okazaki2000}.

\begin{theorem}[{\cite[Theorem 1, Corollary 29]{Okazaki2000}}]\label{thm:witness}
  For two CM-fields $k \subseteq L$ we have $h_k^- \mid 4h_L^-$. If $[L : k]$ is odd, then also $h_k^- \mid h_L^-$ holds.
\end{theorem}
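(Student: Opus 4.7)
The plan is to study $\CCl(L)$ and $\CCl(k)$ through the norm $N = N_{L/k}\colon \CCl(L) \to \CCl(k)$ and the conorm $j = j_{L/k}\colon \CCl(k) \to \CCl(L)$, both of which commute with complex conjugation $c$ and therefore induce maps on the ``minus'' parts. First I would set up $\CCl^-(M) := \ker(N_{M/M^+}\colon \CCl(M)\to\CCl(M^+))$ for $M \in \{k, L\}$ and verify that $|\CCl^-(M)| = h_M^-$. For this, surjectivity of $N_{M/M^+}$ follows since the Hilbert class field of $M^+$ is totally real and hence linearly disjoint from $M$, while injectivity of the conorm $\CCl(M^+)\hookrightarrow\CCl(M)$ is a standard CM-field input.

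Next I would restrict to the minus parts, obtaining $N^-$ and $j^-$, and exploit the key identity $N\circ j = [L:k]$, which shows that $N^-\circ j^-$ is multiplication by $[L:k]$ on $\CCl^-(k)$. When $[L:k]$ is odd this already kills $\ker(j^-)$ on every $\ell$-primary component with $\ell \mid [L:k]$, and a complementary Chevalley-type analysis of the ramification in $L/L^+$ relative to $k/k^+$ disposes of the remaining primes, showing that $j^-$ is in fact injective and hence $h_k^- \mid h_L^-$.

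In the general case, the residual $2$-primary obstruction is controlled via Tate cohomology of $\langle c\rangle$ acting on the unit groups. I would apply the snake lemma to the short exact sequences
\[
1 \to \calO_M^\times \to M^\times \to M^\times/\calO_M^\times \to 1
\]
for $M \in \{k,L\}$, pass to the further quotient yielding $\CCl(M)$, and invoke the Hasse unit indices $Q_M := [\calO_M^\times : \mu_M\calO_{M^+}^\times] \in \{1,2\}$. The resulting diagram chase produces a defect of at most $Q_L \cdot Q_k \leq 4$ in the relation between $\ker(j^-)$ and $\mathrm{coker}(j^-)$, which is exactly the factor $4$ needed to conclude $h_k^- \mid 4h_L^-$.

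The main obstacle will be the precise $2$-primary bookkeeping. Since the constant $4$ is sharp, any slack in the cohomological tracking of the roots-of-unity quotient $\mu_L/\mu_k$ and of the Hasse indices $Q_L, Q_k$ would prevent recovery of the optimal bound; separating the genuinely arithmetic input (capitulation and ramification in the quadratic steps $M/M^+$) from the unit-theoretic corrections is the delicate technical heart of the proof, and is exactly the step where Okazaki's extension of Horie's abelian-case argument to arbitrary CM-fields becomes non-trivial.
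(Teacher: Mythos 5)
First, a point of order: the paper does not prove this statement at all. It is quoted from the literature (Horie for abelian fields, Okazaki for the general case, via the citation \cite[Theorem 1, Corollary 29]{Okazaki2000}), so there is no internal proof to compare yours against and your proposal has to stand on its own as a reconstruction of Okazaki's argument. Its overall shape --- identify $h_M^-$ with $\lvert\CCl^-(M)\rvert$ where $\CCl^-(M)=\ker(N_{M/M^+})$, pass to the induced conorm $j^-\colon\CCl^-(k)\to\CCl^-(L)$, and bound $\lvert\ker(j^-)\rvert$ by $4$ --- is the right kind of plan. But two supporting claims are wrong as stated, and the one step that carries the entire content of the theorem is asserted rather than proved. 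The claim that the conorm $\CCl(M^+)\to\CCl(M)$ is injective is not a ``standard CM-field input''; it is false in general (the kernel can have order $2$). Fortunately you do not need it: surjectivity of $N_{M/M^+}$, which you correctly derive from the Hilbert class field of $M^+$ being totally real, already gives $\lvert\CCl^-(M)\rvert=h_M/h_{M^+}$. More seriously, the identity $N\circ j=[L:k]$ is used backwards: if $x\in\ker(j^-)$ then $[L:k]\,x=N^-(j^-(x))=0$, so $\ker(j^-)$ lies in the $[L:k]$-torsion of $\CCl^-(k)$ and the argument kills the $\ell$-primary parts for $\ell\nmid[L:k]$ --- not, as you write, the parts with $\ell\mid[L:k]$. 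For odd $[L:k]$ this eliminates precisely the $2$-part and leaves untouched exactly the primes you claim it handles; the assertion $h_k^-\mid h_L^-$ in that case then requires knowing that $\ker(j^-)$ is a $2$-group, which is part of the hard general statement and is not supplied by an unspecified ``Chevalley-type analysis of ramification''.

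That hard statement --- that $\ker(j^-)$ is a $2$-group of order dividing $4$ --- is where all the work lies, and your proposal replaces it with the assertion that a snake-lemma chase ``produces a defect of at most $Q_L\cdot Q_k\le 4$''. No diagram is exhibited, the bound $Q_L Q_k$ is not derived (and it is not clear that the product of the two Hasse indices is the correct quantity), and there is a structural obstacle to the bookkeeping you describe: $L/k$ is an arbitrary inclusion of CM-fields, not assumed Galois, so there is no $\Gal(L/k)$-cohomology available, while Tate cohomology of $\langle c\rangle$ only compares each field with its own maximal real subfield and does not by itself control the map between the two minus class groups. Since the constant $4$ is sharp and is exactly what the theorem asserts, the proposal as written is an outline in which the theorem itself has been assumed at the decisive step.
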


\section{Constructing normal CM-fields}\label{sec:construction}

Throughout this section, we will consider a finite solvable group $G$ and a positive number $B \in \R_{>0}$.
We want to describe the construction of all normal CM-fields $L$ inside a fixed
algebraic closure $\overline \Q$ such that $\Gal(L/\Q) \cong G$ and $\lvert
\disc(L) \rvert \leq B$.
Since $G$ is solvable, there exists a normal series with abelian quotients
\begin{align}\label{eq:subnormal}
1 = G_0 \subsetneq G_1 \subsetneq \dotsc \subsetneq G_l = G,\tag{$\star$}
\end{align}
that is, $G_i$ is normal in $G$ and $G_{i+1}/G_i$ is abelian for $0 \leq i \leq l-1$.
This implies that if $L$ is a normal extension with Galois group $G$, then there exists a tower of subfields
\[ 
\Q = L_0 \subsetneq L_{1} \subsetneq \dotsb \subsetneq L_l = L,
\]
where $L_i = L^{G_{i - i}}$ and thus $\Gal(L/L_{i}) = G_{i-1}$ for $0 \leq i \leq l$.
Since each layer $L_i/L_{i - 1}$ is abelian with Galois group $\Gal(L_{i}/L_{i - 1}) \cong G_{l - i + 1}/G_{l - i}$, it suffices for our purpose to be able to construct abelian extensions of a given number field with bounded discriminant.

The main tool for constructing abelian extensions of a given number field is computational class field theory,
which, in a nutshell, provides us with the following result.
For an extension $L/K$ of number fields we denote by $\mathfrak d_{L/K} \subseteq \mathcal O_K$ the relative discriminant and
by $\norm(\mathfrak a) = \lvert \mathcal O_K/\mathfrak a \rvert$ the absolute norm of an ideal $\mathfrak a \neq \{0\}$ of $\mathcal O_K$.

\begin{theorem}\label{thm:cft}
  There exists an efficient algorithm that given
  an abelian group $A$, a number field $K$ and a positive number $B \in \R_{> 0}$,
  determines all extensions $L$ of $K$ with $\Gal(L/K) \cong A$ and $\norm(\mathfrak d_{L/K}) \leq B$.
\end{theorem}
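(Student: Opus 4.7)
The plan is to reduce the enumeration to standard computations of class field theory. By the main theorem of class field theory, for any modulus $\mathfrak{m}$ of $K$, finite abelian extensions $L/K$ with conductor dividing $\mathfrak{m}$ correspond bijectively to subgroups $H$ of the ray class group $\CCl_\mathfrak{m}(K)$, with $\Gal(L/K) \cong \CCl_\mathfrak{m}(K)/H$. Choosing $\mathfrak{m}$ equal to the conductor $\mathfrak{f}(L/K)$ makes this correspondence canonical, so it suffices to enumerate pairs $(\mathfrak{m}, H)$ for which $H$ is the kernel of a surjection onto a group isomorphic to $A$ and $\mathfrak{m}$ is minimal with this property.

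First I would bound the conductor. By the conductor--discriminant formula, $\mathfrak{d}_{L/K} = \prod_{\chi} \mathfrak{f}(\chi)$, where $\chi$ runs over the characters of $\Gal(L/K)$; since $\mathfrak{f}(L/K)$ equals the least common multiple of the $\mathfrak{f}(\chi)$, we have $\norm(\mathfrak{f}(L/K)) \leq \norm(\mathfrak{d}_{L/K}) \leq B$. In particular, only finitely many moduli are possible. I would enumerate integral ideals of $\mathcal{O}_K$ of norm at most $B$ (together with subsets of the real places of $K$, to account for ramification at infinity) by first factoring integers in $[1,B]$ and lifting the prime factorizations to prime ideals of $\mathcal{O}_K$.

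For each candidate modulus $\mathfrak{m}$, I would compute the ray class group $\CCl_\mathfrak{m}(K)$ together with its structure as an abelian group, using the standard algorithms built on top of the class group and unit group of $K$ (see \cite{Cohen2008}). I would then enumerate subgroups $H \leq \CCl_\mathfrak{m}(K)$ with $\CCl_\mathfrak{m}(K)/H \cong A$; this is a purely group-theoretic computation, reducible to an enumeration of surjections $\CCl_\mathfrak{m}(K) \twoheadrightarrow A$ up to automorphisms of $A$. For each such $H$, the associated extension $L$ has a conductor $\mathfrak{f}(L/K)$ and relative discriminant $\mathfrak{d}_{L/K}$ that are computable directly from the characters of $\CCl_\mathfrak{m}(K)/H$ via the conductor--discriminant formula; I would then keep $L$ exactly when $\mathfrak{f}(L/K) = \mathfrak{m}$ (to avoid enumerating a field more than once) and $\norm(\mathfrak{d}_{L/K}) \leq B$.

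The main obstacle is efficiency rather than correctness. The number of candidate moduli grows with $B$, and computing $\CCl_\mathfrak{m}(K)$ requires mapping the global units into $(\mathcal{O}_K/\mathfrak{m})^\times$ and computing a cokernel, which becomes costly when $\mathfrak{m}$ has many prime factors. In practice the algorithm is organized so that ray class groups are built incrementally by adding one prime at a time (re-using data from smaller moduli), and subgroups $H$ are pruned early whenever the partial discriminant computed from the characters already exceeds $B$; this is essential to make the method usable for the towers of abelian extensions needed in Section~\ref{sec:construction}.
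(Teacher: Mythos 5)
Your proposal reproduces, essentially verbatim, the first half of what the paper says about this theorem (the paper gives no formal proof, only a citation to \cite{Cohen2000, Cohen2008, Fieker2019} and a remark): abelian extensions with bounded discriminant are parameterized by pairs $(\mathfrak f, H)$ of a conductor and a subgroup of the ray class group $\CCl_{\mathfrak f}$ with quotient isomorphic to $A$, the conductor--discriminant formula bounds the moduli to be searched and lets one compute $\mathfrak d_{L/K}$ from the characters, and one filters by requiring $\mathfrak m$ to actually be the conductor. All of this is correct, including the bound $\norm(\mathfrak f(L/K)) \leq \norm(\mathfrak d_{L/K})$.

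The gap is that you stop at the class-field-theoretic parameterization and never explain how to produce the extension $L$ itself. The theorem, as it is used throughout the paper, requires $L$ to be returned as an explicit number field --- the paper states immediately after the theorem that ``the relative extensions computed are specified by a defining polynomial equation'' --- because these fields must subsequently serve as base fields for further abelian extensions and have their (relative) class numbers computed. The existence theorem of class field theory guarantees that a field $L$ corresponds to each pair $(\mathfrak f, H)$, but it does not hand you a defining polynomial, and the paper's accompanying remark explicitly identifies this second step as the most expensive part of the algorithm. The standard solution, which your proposal omits, is Kummer theory: one passes to $K(\zeta_e)$ for $e$ the exponent of $A$, computes a suitable $S$-unit group $U$ there so that $K(\zeta_e) \subseteq L(\zeta_e) \subseteq K(\zeta_e, \sqrt[e]{U})$, and descends to recover a defining equation for $L/K$; the cost is dominated by the $S$-unit (equivalently, class group) computation in $K(\zeta_e)$, for which the paper invokes the recent methods of \cite{Biasse2019} for fields with many subfields. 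Without some such construction step your algorithm determines the \emph{invariants} of the extensions but not the extensions themselves.
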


The relative extensions computed are specified by a defining polynomial
equation, that is, an irreducible polynomial $g \in K[x]$ such that $L \cong K[x]/(g)$.
The term \textit{efficient} is not meant as a precise statement on the complexity,
but more of a judgment based on practical observations and experiments.
We will refrain from diving into all the details of this
topic and refer the reader to~\cite{Cohen2000, Cohen2008} as well the recent
improvements in~\cite{Fieker2019} (in particular, on how to exploit the
normality of the chain). We will comment on one particular subproblem.

\begin{remark}
  From class field theory it follows that in Theorem~\ref{thm:cft}, the abelian
  extensions $L$ are parameterized by pairs $(\mathfrak f, A)$, consisting of
  the conductor of $L/K$ and a subgroup $A \subseteq \CCl_{\mathfrak f}$ of the
  ray class group such that $\CCl_{\mathfrak f}/A \cong G$ (modulo a certain
  equivalence relation).
  Although this allows us to enumerate the abelian extensions, it does not provide a defining polynomial equation.
  Finding the defining polynomial equation is the second step of the algorithm and in practice the most expensive one
  if the parameters, that is, the degree and discriminant of $K$ or the
  cardinality of $G$, are large.  The idea behind this step is to use Kummer
  theory. More precisely, one finds generators of a suitable $S$-unit group $U$
  of $K(\zeta_e)$ such that $K(\zeta_e) \subseteq L(\zeta_e) \subseteq
  K(\zeta_e, \sqrt[e]{U})$, where $e$ is the exponent of $G$ and $\zeta_e$
  denotes a primitive $e$-th root of unity.  In particular, this computation of
  an $S$-unit group of $K(\zeta_e)$, which is comparable to the computation of
  the class group of $K(\zeta_e)$, was until recently only possible using
  subexponential algorithms (see~\cite[\S{}I.1.2]{Simon1998}). We will
  make instead use of the recent algorithmic advantages for number fields with many
  subfields as introduced in~\cite{Biasse2019}. These are often applicable
  in our situation, since the field $K(\zeta_e)$ usually admits plenty
  of subfields with the right structure.
\end{remark}

Following the strategy just described, to find all normal CM-fields with Galois group $G$ and relative class number one, one could try to first construct all normal extensions with Galois group $G$. For this list of candidates one then needs to discard all fields which are not CM-field or whose relative class number is not one.
This approach has the disadvantage that the number of candidates is in practice too large, rendering this strategy futile.
Instead we apply the following improvements to keep the list of candidates manageable. In all cases, the idea is to apply aggressive sieving to the constructed subfields $L_i$. In this regard, the choice of normal series~\eqref{eq:subnormal} plays an important role.

\subsection{Choice of a series}\label{subsection:choice_series}
In most cases, we will use as a chain the derived series of the group $G$. The choice of this series minimizes the number of layers we need to construct. On the other hand, it does not exploit the fact that we are searching for CM-fields. For this reason, we sometimes use a different series.

    In view of Corollary~\ref{cor:realnormal} one option would be to first
    determine the possible maximal real subfields and then compute the
    possible CM-fields as totally imaginary quadratic extensions.  By checking
    tables of small groups, one can quickly identify the possible Galois
    groups $H_1,\dotsc,H_r$ of the maximal real subfield of a normal CM-field
    with Galois group $G$ (the group $G$ must be a central $\CC_2$-extension
    of the group $H_i$).
    When constructing normal CM-fields for \textit{all} possible Galois of a given degree,
    this approach has the advantage of computing every maximal real subfield only once.

\subsection{Control over the class number}\label{subsection:relative_class_number_cm}
  In the case we are using the derived series, we can exploit the fact that we are looking only for CM-fields with relative class number one, which implies that all CM-subfields must have relative class number equal to $1$, $2$ or $4$ (Theorem~\ref{thm:witness}).
    This allows to apply a very aggressive sieving when constructing the CM-fields iteratively:
    As soon as we have found a field $L_i$ which is a CM-field and which satisfies $h_{L_i}^- \notin \{1, 2, 4\}$,
    we can drop $L_i$ from the process.
    As we actually expect the proportion of CM-fields with relative class number one and Galois group $G$
    to be very small, this approach will be quite effective.
    (Note that this idea cannot be used when first constructing the possible maximal totally real subfields, since the CM-fields are constructed only in the very last step.)
    On the group theoretical
    side checking if a subfield is a CM-field can be predicted by noticing that for a subgroup $H \subseteq \Gal(L/\Q)$ the field $L^H$ is a CM-field if and only if
    complex conjugation is not contained in $H$. Thus, for example, if
    $H \cap \ccenter(G) = 1$, then $L^H$ must be a CM-field.
    \begin{example}
      Consider the group $G = \CC_2^3 ._* \DD_4$ with identifer $(64, 67)$. Assume that we want to find all normal CM-fields with Galois group $G$ and discriminant bounded by $10^{115}$.
      The abelianization of $G$ is isomorphic to $H = \CC_2^2\times \CC_4$ and there are $85$ fields $L_1$ with Galois group $H$ with discriminant bounded by $10^{115/4}$ that are either totally real or CM-fields satisfying $h_{L_1}^- \in \{1, 2, 4\}$.
      As a second step, we construct all the fields with Galois group isomorphic to $G$ that are extensions of one of these $85$ fields. Among the $85$ fields, only $13$ fields admit an extension with Galois group $G$ and discriminant bounded by $10^{115}$, giving rise to $57$ fields with Galois group $G$. None of these fields is a CM-field.

If we had not used the criterion above, the computation would have been much more complicated.
Indeed, we would have tried to extend all of the $642$ number fields with Galois group isomorphic to $H$ and discriminant bounded by $10^{115/4}$. Consequently, we would have found $188$ fields with Galois group isomorphic to $G$, of which only one is a CM-field with relative class number (of course) not one.
\end{example}
    
\subsection{Control over the ramification at the infinite places}\label{subsection:ramification_infinite}
    
Since we are searching for CM-fields, we want the complex conjugation $c$ to lie in the center of the Galois group of the target fields.
This implies that if the group $G_i$ in the normal series \eqref{eq:subnormal} contains the center $\ccenter(G)$, the $i$-th field in the tower of subfields must be totally real.
This imposes a constraint on the fields we construct, which can be exploited as in the following example.

\begin{example}
  Consider the construction of CM-fields with Galois group $\QQQ_{64}$. The derived series has length $2$ and the quotient of $\QQQ_{64}$ by its derived subgroup, which contains the center of the group, is isomorphic to $\CC_2^2$. Therefore, according to the criterion, the abelian extensions of $\mathbf Q$ with Galois group $\CC_2^2$ that extend to a field with Galois group $\QQQ_{64}$ have to be totally real.
  This dramatically improves the runtime: The number of biquadratic extensions of discriminant bounded by $10^{115/16}$ is $5460$, while the number of those fields which are totally real fields is $1141$.
    \end{example}

\section{Degree 64}\label{sec:64}

We now focus on the computation of normal non-abelian CM-fields of degree $64$ with relative class number one.
If $L$ is a normal CM-field of degree 64 with $h_L^- = 1$, then according
to~\cite[Theorem 1]{Lee2006} the root discriminant satisfies $\rho(L) \leq
61.90$, which implies $\lvert \disc(L) \rvert \leq 10^{115}$.
From~\cite{Besche2002} we know that there are 256 non-abelian groups of order 64.

We use the derived series together with the criteria given in Sections~\ref{subsection:relative_class_number_cm} and~\ref{subsection:ramification_infinite}.
As we follow the derived series, the first layer of fields that we construct are the possible maximal abelian subextensions.
In Table~\ref{tab:maxab64}, we list the number of these fields together with
the discriminant bound. We also present the proportion of those fields which are CM-fields and which have relative class number not in $\{1, 2, 4\}$
(the fields for which we can stop the search), as well as in the last column
the number of candidates after the sieving with the relative class numbers.
Note that by using the sieving, we can reduce the number of possible maximal abelian subfields from 10417 to 3047.

\begin{table}[h]
    \centering
    \caption{Possible maximal abelian subfields in degree 64}
    \begin{tabular}{|c|c|c|c|c|}\hline
      $\Gal(A/\Q)$ & $b : \disc(A) \leq b$ & $\#\{A\}$ & $\#\{A : A\text{ CM-field, } h_A^- \not\in \{1, 2, 4\}\}$ & $\#\{A \text{ useful}\}$\rule[-0.9ex]{0pt}{0pt} \\ \hline
				$\CC_4 \times \CC_4$ & $10^{\frac{115}{4}}$ & 49 & 32 & 17\rule{0pt}{2.6ex} \\
				$\CC_4 \times \CC_8$ & $10^{\frac{115}{2}}$ & 5 & 4 & 1 \\
				$\CC_2^5$						 & $10^{\frac{115}{2}}$ & 4 & 4 & 0 \\
				$\CC_2^3 \times \CC_4$ & $10^{\frac{115}{2}}$ & 60 & 58 & 2 \\
				$\CC_2^4$							 & $10^{\frac{115}{4}}$ & 231 & 221 & 10 \\
				$\CC_2 \times \CC_{16}$ & $10^{\frac{115}{2}}$ & 27 & 22 & 5 \\
				$\CC_2 \times \CC_4$ & $10^{\frac{115}{8}}$ & 1399 & 912 & 487 \\
				$\CC_2 \times \CC_2$ & $10^{\frac{115}{16}}$ & 5460 & 3438 & 2022 \\
				$\CC_2 \times \CC_8$ & $10^{\frac{115}{4}}$ & 92 & 70 & 22 \\
				$\CC_2^2 \times \CC_4$ & $10^{\frac{115}{4}}$ & 642 & 557 & 85 \\
				$\CC_2^3$ & $10^{\frac{115}{8}}$ & 2403 & 2009 & 394 \\
				$\CC_2^2 \times \CC_8$ & $10^{\frac{115}{2}}$ & 25 & 24 & 1 \\
				$\CC_2 \times \CC_4^2$ & $10^{\frac{115}{2}}$ & 20 & 19 & 1 \\
				\hline
    \end{tabular} \label{tab:maxab64}
\end{table}

Using these abelian extensions as a first step in the field construction we have constructed all normal CM-fields of degree 64 with maximal abelian subextension equal to one of the fields of Table~\ref{tab:maxab64}.
There are in total 81 such fields. The result is summarized in Table~\ref{tab:cmdeg64}. We list the Galois group $G$, the identifier of the group in the table of small groups~\cite{Besche2002}, the number of CM-fields found as well as the minimal discriminant found.

\begin{table}[htbp]
  \caption{Possible CM-fields of degree $64$ with relative class number one and discriminant bounded by $10^{115}$.}
  \begin{tabular}{|c|c|c|c|}\hline
    $\Gal(L/\Q)$ & ID & $\lvert\{L\}\rvert$ & $\min_L \lvert\disc(L)\rvert$ \\ \hline
    $\CC_8\rtimes_2\CC_8$ & (64, 15) & 1 & $2^{286} 3^{56}$\\ %
    $\CC_{16}\rtimes \CC_4$ & (64, 28) & 1 & $2^{232} 5^{60}$\\ %
    $\CC_2^2\rtimes \CC_{16}$ & (64, 29) & 1 & $2^{128} 17^{60}$\\ %
    $\CC_4\times \CC_2^2\rtimes \CC_4$ & (64, 58) & 1 & $2^{216} 3^{32} 5^{48}$\\ %
    $\CC_2^3._* \QQQ_8$ & (64, 66) & 2 & $2^{224} 3^{48} 5^{32}$\\ %
    $\CC_2^4.\CC_2^2$ & (64, 69) & 6 & $2^{192} 3^{48} 5^{48}$\\ %
    $\CC_2\times (\QQQ_8\rtimes^* \CC_4)$ & (64, 96) & 1 & $2^{128} 3^{48} 13^{48}$\\ %
    $\CC_2^3._* \DD_4$ & (64, 97) & 1 & $2^{64} 3^{32} 5^{48} 29^{32}$\\ %
    $\CC_2\times \CC_4\wr \CC_2$ & (64, 101) & 4 & $2^{128} 5^{48} 11^{32}$\\ %
    $\CC_2\times \CC_4.\QQQ_8$ & (64, 106) & 1 & $2^{252} 3^{56}$\\ %
    $\CC_2\times \CC_8.\CC_4$ & (64, 110) & 2 & $2^{268} 3^{56}$\\ %
    $\CC_4^2._*\CC_4$ & (64, 113) & 2 & $2^{168} 3^{48} 5^{56}$\\ %
    $\CC_8\rtimes^*\DD_4$ & (64, 116) & 5 & $2^{192} 3^{32} 5^{56}$\\ %
    $\CC_2^2\times \CC_4\rtimes \CC_4$ & (64, 194) & 1 & $2^{192} 3^{48} 5^{48}$\\ %
    $\CC_2\times \CC_4^2\rtimes \CC_2$ & (64, 195) & 1 & $2^{160} 3^{48} 5^{48}$\\ %
    $\CC_2\times \CC_4\times \DD_4$ & (64, 196) & 3 & $2^{176} 3^{32} 5^{48}$\\ %
    $\CC_2\times \CC_2^2\wr \CC_2$ & (64, 202) & 4 & $2^{176} 3^{48} 11^{32}$\\ %
    $\CC_2\times \CC_4\rtimes \DD_4$ & (64, 203) & 3 & $2^{192} 3^{48} 7^{32}$\\ %
    $\CC_2\times \CC_2^2\rtimes Q_8$ & (64, 204) & 4 & $2^{216} 3^{48} 5^{32}$\\ %
    $\CC_2\times \CC_2^2.\DD_4$ & (64, 205) & 2 & $2^{192} 3^{48} 5^{32}$\\ %
    $\CC_2\times \CC_4._*\DD_4$ & (64, 207) & 1 & $2^{192} 3^{48} 5^{48}$\\ %
    $\CC_2\times \CC_4^2\rtimes^*\CC_2$ & (64, 209) & 1 & $2^{192} 3^{48} 5^{48}$\\ %
    $\CC_2^2\times (\CC_4 . \CC_4)$ & (64, 247) & 2 & $2^{160} 3^{32} 5^{56}$\\ %
    $\CC_2\times \CC_8\circ \DD_4$ & (64, 248) & 2 & $2^{176} 3^{32} 5^{56}$\\ %
    $\CC_2\times \CC_4\circ \DD_8$ & (64, 253) & 2 & $2^{176} 3^{56} 11^{32}$\\ %
			\hline
  \end{tabular}
    \label{tab:cmdeg64}
\end{table}

Finally, for each field, we have exhibited a CM-subfield with relative class number not in $\{1, 2, 4\}$, which proves
that the relative class number of every field is non-trivial. Thus we have proven the following result.

\begin{theorem}\label{thm:64}
  Assuming GRH, there is no normal CM-field of degree 64 with relative class number one.
\end{theorem}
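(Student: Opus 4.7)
The plan is to enumerate all candidate normal CM-fields of degree $64$ satisfying the Lee--Kwon discriminant bound $\lvert\disc(L)\rvert \leq 10^{115}$ explicitly and then to exhibit, for each candidate, a CM-subfield whose relative class number lies outside $\{1,2,4\}$; by Theorem~\ref{thm:witness} this forces $h_L^- > 1$. Since any non-abelian group of order $64$ is solvable (there are 256 such groups to consider), each candidate $L$ can be constructed as a tower along its derived series using computational class field theory (Theorem~\ref{thm:cft}).

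First I would enumerate the possible maximal abelian subextensions $A \subseteq L$, corresponding to the top layer of the derived series, subject to the conductor--discriminant inequality $\lvert\disc(A)\rvert^{[L:A]} \leq \lvert\disc(L)\rvert$. On this list I would immediately apply the Horie--Okazaki sieve, dropping every CM-field $A$ with $h_A^- \notin \{1,2,4\}$. As reported in Table~\ref{tab:maxab64}, this reduces the pool dramatically (e.g.\ from $5460$ to $2022$ biquadratic fields, and from $642$ to $85$ fields with Galois group $\CC_2^2 \times \CC_4$). In parallel, I would apply the ramification criterion from Section~\ref{subsection:ramification_infinite}: whenever a term of the derived series contains $\ccenter(G)$, the corresponding intermediate field must be totally real, which eliminates entire branches of the search. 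I would then iteratively extend each surviving subfield by abelian layers of the correct Galois structure, reapplying the CM/Horie--Okazaki sieve and the totally-real constraint at every intermediate step.

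After this construction, only $81$ candidate fields survive, spread across the $25$ Galois groups enumerated in Table~\ref{tab:cmdeg64}. For each such $L$ I would search among its subfields for a CM-subfield $k$ with $h_k^- \notin \{1,2,4\}$. Producing such a witness $k$ is feasible in practice because $[k:\Q]$ is much smaller than $64$ and the relative class number of a CM-field of moderate degree can be computed by Louboutin's analytic method. Once such a $k$ is exhibited, Theorem~\ref{thm:witness} yields $h_L^- \geq h_k^-/4 > 1$, concluding the proof.

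The main obstacle is computational rather than conceptual: controlling the combinatorial explosion during the tower construction. Without both sieves (relative class numbers of CM subfields, and the forced totally-real intermediate subfields) the intermediate pools become intractable, as illustrated by the example worked out in Section~\ref{subsection:relative_class_number_cm}. The most expensive individual computation is the determination of an $S$-unit group of the auxiliary field $K(\zeta_e)$ needed for the Kummer-theoretic step of Theorem~\ref{thm:cft}; this is handled by exploiting the rich subfield structure of $K(\zeta_e)$ via the methods of~\cite{Biasse2019}. Provided that these practical issues can be managed, the outlined search is exhaustive and the theorem follows.
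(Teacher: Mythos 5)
Your proposal follows essentially the same route as the paper: enumerate the maximal abelian subextensions under the Lee--Kwon bound $\lvert\disc(L)\rvert\leq 10^{115}$, sieve with the Horie--Okazaki criterion and the totally-real constraint while climbing the derived series, arrive at the $81$ surviving candidates of Table~\ref{tab:cmdeg64}, and kill each one by exhibiting a CM-subfield $k$ with $h_k^-\notin\{1,2,4\}$. One small remark: the concluding inequality $h_L^-\geq h_k^-/4>1$ is not quite how the contradiction is obtained (e.g.\ $h_k^-=3$ gives $3/4<1$); one should instead argue directly from the divisibility $h_k^-\mid 4h_L^-$ of Theorem~\ref{thm:witness}, which shows that $h_L^-=1$ would force $h_k^-\in\{1,2,4\}$.
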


\section{Degree 96}\label{sec:96}

We now focus on the case of normal non-abelian CM-fields of degree 96. According
to~\cite{Besche2002} there are 224 non-abelian groups of order 96.
If $L$ is a normal CM-field of degree 96 with $h_L^- = 1$, then according to
\cite[Theorem 1]{Lee2006} its root discriminant satisfies $\rho(L) \leq 50.71$,
which implies $\disc(L) \leq 10^{164}$.
In this case, we follow a different approach: Instead of using the derived series, we list first the totally real normal fields of degree $48$ and then try to extend them to CM-fields of degree $96$, as discussed in Section~\ref{subsection:choice_series}.
The reason for this choice is that the totally real fields of degree $48$ and discriminant bounded by $10^{82}$ are only a few. This gives strong restrictions on the possible Galois groups of the CM-fields. The same was not true for the degree $64$ fields, and this explains the choice of a different strategy.
We have computed all normal totally real number fields of degree 48 with discriminant bounded by $10^{82}$. There are in total 6 fields with Galois groups as described in Table~\ref{tab:tr96}.
\begin{table}[htbp]
  \caption{Possible totally real subfields $K$ of normal CM-fields of degree 96 with relative class number one and discriminant bounded by $10^{164}$.}
  \begin{tabular}{|c|c|c|c|c|}\hline
    $\Gal(K/\Q)$ & ID & $\lvert\{K\}\rvert$ & {discriminant(s)}  \\ \hline
    $\CC_3\times \CC_4\rtimes \CC_4$ & (48, 22) & 1 & $3^{36} \cdot  5^{36} \cdot 7^{44}$ \\
    $\CC_3 \times (\CC_4 . \CC_4)$ & (48, 24) & 1 & $5^{42} \cdot 13^{46}$ \\
    $\GL_2(\F_3)$ & (48, 29) & 2 & $2^{64} \cdot 389^{24}$, $3^{42} \cdot 367^{24}$ \\
    $\CC_4 . \mathfrak A_4$ & (48, 33) & 1 &  $7^{32} \cdot 181^{24}$ \\
    $\DD_4\circ \Dic_3$ & (48, 39) & 1 & $2^{88} \cdot 5^{40} \cdot  13^{24}$ \\
    $\CC_6\times \DD_4$ & (48, 45) & 1 & $2^{108} \cdot 3^{24} \cdot 7^{44}$\\
    \hline
    \end{tabular}\label{tab:tr96}
  \end{table}

For each totally real field found we have computed the totally complex quadratic extensions, which are normal over $\Q$. There are in total 4 normal CM-fields of degree 96 and discriminant bounded by $10^{164}$, with Galois groups as described in Table~\ref{tab:cm96}.
\begin{table}[htbp]
  \caption{Possible normal CM-fields of degree 96 with relative class number one and discriminant bounded by $10^{164}$.}
  \begin{tabular}{|c|c|c|c|c|}\hline
    $\Gal(L/\Q)$ & ID & $\#\{L\}$ & {discriminant}  \\ \hline
    $\CC_3\times \CC_8\rtimes \CC_4$ & (96, 47) & 1 & $5^{84} \cdot 13^{92}$ \\
    $\CC_6\times \CC_4\rtimes \CC_4$ & (96, 163) & 1 & $3^{72} \cdot  5^{72} \cdot 7^{88}$\\
    $\CC_4\rtimes \mathfrak S_4$ & (96, 187) & 1 & $3^{84} \cdot 367^{48}$ \\
    $\DD_4\times \CC_2\times \CC_6$ & (96, 221) & 1 & $2^{216} \cdot 3^{48} \cdot 7^{88}$\\
    \hline
    \end{tabular}\label{tab:cm96}
  \end{table}
For each of these fields, we have exhibited a CM-subfield of relative class number $> 4$. 
Thus using~\cite[Theorem 1]{Okazaki2000} we have proven the following.

\begin{theorem}\label{thm:96}
Assuming GRH, there is no normal CM-field of degree 96 with (relative) class number one.
\end{theorem}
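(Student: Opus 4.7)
The plan is to enumerate all candidate normal CM-fields $L$ of degree $96$ whose relative class number could conceivably equal one, and then to certify that each candidate actually satisfies $h_L^- > 1$ by exhibiting a small CM-subfield witness. The Lee--Kwon bound reduces the search to those $L$ with $|\disc(L)| \leq 10^{164}$, so the problem is finite; the challenge is to organize the enumeration so that it is feasible in practice.

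Rather than building $L$ directly from its derived series, I would exploit Corollary~\ref{cor:realnormal}: the maximal totally real subfield $L^+$ is normal of degree $48$, and the conductor--discriminant formula applied to $L/L^+$ gives $|\disc(L^+)|^2 \mid |\disc(L)|$, hence $|\disc(L^+)| \leq 10^{82}$. The first step is therefore to enumerate all normal totally real fields $K$ of degree $48$ with $|\disc(K)| \leq 10^{82}$, using the tower-construction machinery of Section~\ref{sec:construction}. For each solvable group $H$ of order $48$ that could arise as $\Gal(L^+/\Q)$ (all groups of order $48$ are solvable), one walks a normal series of $H$ with abelian quotients and applies Theorem~\ref{thm:cft} at each layer, sieving at every intermediate step by discriminant and by the ramification-at-infinity criterion of Section~\ref{subsection:ramification_infinite} to keep the totally real signature. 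This produces the short list in Table~\ref{tab:tr96}.

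The second step is to lift each such $K$ to a normal CM-field $L$ of degree $96$ satisfying the discriminant bound, by enumerating the totally imaginary quadratic extensions $L/K$ that are normal over $\Q$ and for which the induced complex conjugation lies in $\ccenter(\Gal(L/\Q))$ (as required by the characterization of normal CM-fields from Section~\ref{sec:prelim}). Again this is an instance of Theorem~\ref{thm:cft} applied with $A = \CC_2$ and relative discriminant bound $10^{164}/|\disc(K)|^2$. This step is expected to yield only the four candidates of Table~\ref{tab:cm96}.

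The third step is the elimination. For each of the four candidates $L$, I would descend through the subfield lattice to find a CM-subfield $k$ of small enough degree that $h_k^-$ is directly computable, and verify that $h_k^- > 4$. Invoking Theorem~\ref{thm:witness} with the pair $k \subseteq L$ then gives $h_k^- \mid 4 h_L^-$ and hence $h_L^- > 1$, completing the proof. The main obstacle is computational rather than conceptual: the enumeration of normal totally real degree-$48$ fields in step one requires computing $S$-unit groups of cyclotomic compositums of large degree inside the class field theory routines, and this is only tractable thanks to the subfield-aware algorithms of~\cite{Biasse2019} combined with the aggressive sieving described in Section~\ref{sec:construction}.
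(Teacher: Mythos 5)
Your proposal matches the paper's own proof of this theorem essentially step for step: the paper likewise abandons the derived series in degree 96, first enumerates the six normal totally real fields of degree 48 with discriminant at most $10^{82}$ (Table~\ref{tab:tr96}), lifts them to the four candidate CM-fields of Table~\ref{tab:cm96}, and eliminates each by exhibiting a CM-subfield of relative class number greater than 4 via Theorem~\ref{thm:witness}. The approach and the key ingredients (Lee--Kwon bound, Corollary~\ref{cor:realnormal}, class field theory construction, Okazaki witness) are the same, so there is nothing to add.
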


\section{Relative class number one problem for normal CM-fields}\label{sec:relcmone}

While the results from the preceding two sections solve the class number one problem for normal CM-fields, they are not sufficient to solve
the equivalent problem for relative class numbers.
Although the previous results of various authors do often include the classification of CM-fields with relative class number one, they do not cover all the possible degrees and Galois groups.
The purpose of this section is to complete this classification by computing the remaining cases.

\subsection{Degree bounds}
We first recall the bounds on the degree of a normal CM-field of relative class number one.
Denote by
\[ 
    d = \limsup_L \{ [L : \Q]\}
\]
where $L$ runs through all normal CM-fields with relative class number one.
Define $d_{\, \mathrm{GRH}}$ in a similar way assuming GRH.
It was shown by Odlyzko~\cite{Odlyzko1975} that $d < \infty$.
An explicit bound was provided by Hoffstein~\cite{Hoffstein1979}, who showed that $d \leq 434$.
These bounds were subsequently improved to $d \leq 266$ and $d_{\, \mathrm{GRH}} \leq 164$ by Bessassi~\cite{Bessassi2003}.
The best currently known bounds were later obtained by Lee--Kwon~\cite{Lee2006}, who proved that $d \leq 216$ and $d_{\, \mathrm{GRH}} \leq 96$.

\subsection{Abelian fields}

The abelian CM-fields with class number one were determined by Yamamura in~\cite{Yamamura1994}. There are 172 such fields, the largest field having degree $24$.
The corresponding relative class number one problem was solved by Chang--Kwon in~\cite{Chang2000}. There are 300 such fields, the largest field having degree $24$
(see Table~\ref{tab:allthefieldsdegrees}.)

\subsection{Non-abelian fields}
We now focus on the case of non-abelian fields. Under GRH, we know that the degree of a normal CM-field with relative class number one is bounded by $96$. The following theorems due to various authors give further constraints.

\begin{theorem}\label{thm:structure_cnp}
  Let $L$ be a normal CM-field of degree $d$ with relative class number one and Galois group $G$. Assume that $G$ is not abelian.
  Then the following hold:
  \hfill
  \begin{enumerate}
    \item If $d$ is equal to $4p^2$ for an odd prime $p$, then $p = 3$ and $G = \DD_{6} \times \CC_3$.
    \item If $d$ is equal to $4p$ for an odd prime $p$, then $G$ is dihedral.
    \item The degree satisfies $d \neq 2pq$ for all odd distinct primes $p, q$.
    \item The group $G$ is not dicyclic.
\end{enumerate}
\end{theorem}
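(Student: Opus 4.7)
The plan is to combine previously established results from the literature with elementary group-theoretic arguments that exploit the condition $c\in\ccenter(G)$ with $c$ an involution. I would treat the four items in an order that lets earlier items simplify the later ones, and in each case use Theorem~\ref{thm:witness} to transfer known bounds on relative class numbers from small CM-subfields up to $L$.

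First I would establish item~(4). The dicyclic group $\Dic_n=\langle a,b\mid a^{2n}=1,\ b^2=a^n,\ bab^{-1}=a^{-1}\rangle$ contains a unique involution $a^n$, which automatically lies in $\ccenter(\Dic_n)$; hence this element must coincide with complex conjugation, and $L^+$ has Galois group $\DD_n$ over $\Q$. The analytic lower bounds on $h_L^-$ for CM-fields with dicyclic Galois group established by Louboutin~\cite{Louboutin1999} and Lefeuvre~\cite{Lefeuvre2000}, combined with the Lee--Kwon discriminant bounds, force $h_L^-\geq 2$ whenever the field is non-abelian dicyclic.

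Items~(2) and~(3) then become largely combinatorial. For~(2), every non-abelian group of order $4p$ with $p$ an odd prime is either dihedral $\DD_{2p}$ or dicyclic $\Dic_p$; item~(4) eliminates the dicyclic option. For~(3), suppose $d=2pq$ with $p<q$ distinct odd primes. By Sylow theory $G$ admits a normal Sylow $q$-subgroup, so $G\cong\CC_q\rtimes H$ with $|H|=2p$; the presence of a central involution forces the unique involution of $H$ to act trivially on $\CC_q$, which produces both an imaginary quadratic subfield and a CM-subfield of degree $2q$ (of dihedral type). Applying Theorem~\ref{thm:witness} to the degree-$2q$ subfield, together with the lower bounds for relative class numbers of dihedral CM-fields \cite{Louboutin1999,Lefeuvre1998}, eliminates all such pairs $(p,q)$.

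For item~(1), I would first enumerate the non-abelian groups of order $4p^2$ admitting a central involution. For $p\geq 5$, the subfield lattice of $L$ always yields a CM-subfield $k$ whose relative class number, controlled via $h_k^-\mid 4h_L^-$ and the Chang--Kwon classification of abelian CM-fields with small $h^-$~\cite{Chang2000}, is incompatible with $h_L^-=1$. The residual case $p=3$ is degree $36$, where the classification of Chang--Kwon~\cite{Chang2002} isolates $\DD_6\times\CC_3$ as the unique non-abelian possibility. The main obstacle is item~(4): unlike the other items it concerns an infinite family parameterized by $n$, so one cannot simply enumerate possible Galois groups. The success of the plan rests on the uniform analytic lower bounds for $h_L^-$ of dicyclic CM-fields due to Louboutin and Lefeuvre, which exploit the fact that the non-trivial irreducible representations of $\Dic_n$ are two-dimensional and induce a factorization of the Dedekind zeta function amenable to Hoffstein-type estimates; producing such bounds is by far the deepest ingredient.
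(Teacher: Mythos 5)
The paper's own ``proof'' of this theorem is a one-line citation: items (1)--(4) are, in order, \cite[Theorem 1]{Chang2002}, \cite[Theorem 1]{Kwon2009}, \cite[Theorem 7, Lemma 3]{Louboutin1997} and \cite[Theorem 7]{Louboutin1999}. Your item (4) ultimately defers to the same reference \cite{Louboutin1999}, and your reduction of item (2) to item (4) is sound once you add the missing filter: the classification ``non-abelian of order $4p$ $\Rightarrow$ $\DD_{2p}$ or $\Dic_p$'' is false as stated ($\Alt_4$ for $p=3$, and the Frobenius groups $\CC_p\rtimes\CC_4$ when $4\mid p-1$, are further possibilities), and these extra groups are excluded only because they have trivial centre and hence cannot carry complex conjugation. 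So far, no real disagreement with the paper.

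The genuine gap is in your item (3). Since $\lvert G\rvert=2pq$ with $pq$ odd, the central involution $c$ generates a normal Hall $2$-subgroup, so by Schur--Zassenhaus $G\cong\CC_2\times Q$ with $Q$ of order $pq$; $Q$ must be non-abelian (otherwise $G$ is abelian), hence $Q\cong\CC_q\rtimes\CC_p$ with $q$ Sylow $p$-subgroups. Consequently $G$ has \emph{no} normal subgroup of order $p$, and the only normal CM-subfields of $L$ are $L$ itself, the abelian field $L^{\CC_q}$ of degree $2p$, and an imaginary quadratic field. The ``CM-subfield of degree $2q$ of dihedral type'' on which your argument pivots does not exist as a normal field; the degree-$2q$ CM-subfields $L^{P}$ ($P$ a Sylow $p$-subgroup) are non-normal over $\Q$, so the dihedral relative-class-number bounds of \cite{Lefeuvre1998,Lefeuvre2000} simply do not apply to them. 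Moreover, Theorem~\ref{thm:witness} only yields $h_L^->1$ after you exhibit a \emph{specific} CM-subfield $k$ with $h_k^-\notin\{1,2,4\}$, which cannot be read off from the Galois type of $k$ alone. The same criticism applies to your item (1) for $p\geq 5$: ``the subfield lattice always yields a CM-subfield $k$ \dots incompatible with $h_L^-=1$'' names neither the subfield nor the reason its relative class number is forced outside $\{1,2,4\}$. The published proofs of these two items (Louboutin--Okazaki--Olivier for $2pq$, Chang--Kwon for $4p^2$) instead establish analytic lower bounds for $h_L^-$ of the full field in terms of its discriminant and degree; some input of that kind is unavoidable here and is absent from your sketch.
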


\begin{proof}
 It follows from {\cite[Theorem 1]{Chang2002}}, {\cite[Theorem 1]{Kwon2009}}, 
{\cite[Theorem 7, Lemma 3]{Louboutin1997}} and~\cite[Theorem 7]{Louboutin1999}.
\end{proof}
\begin{theorem}\label{thm:bound_degree_dihedral}
  Let $L$ be a normal CM-field with Galois group $\DD_{2^rl}$, $l$ odd, and relative class number one.
  Then $r \leq 4$ and $l \leq 7$.
\end{theorem}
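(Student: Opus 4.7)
I plan to deduce both bounds by exhibiting appropriate CM-subfields of $L$ and applying Theorem~\ref{thm:witness} together with known classifications of dihedral CM-fields with relative class number one.

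Write $n = 2^r l$ and $G = \DD_{n} = \langle \sigma, \tau \mid \sigma^n = \tau^2 = 1,\ \tau\sigma\tau^{-1} = \sigma^{-1}\rangle$, so $[L:\Q] = 2n$. Since $L$ is a CM-field and $\ccenter(\DD_n)$ is non-trivial only for $n$ even, we must have $r \geq 1$ and complex conjugation equals $c = \sigma^{n/2} = \sigma^{2^{r-1}l}$. For each divisor $m \mid l$, consider the subgroup $H_m = \langle \sigma^{2^r m}\rangle$. It is characteristic in the normal subgroup $\langle \sigma \rangle$ hence normal in $G$, cyclic of odd order $l/m$, and the congruence $2^{r-1} l \equiv 2^r m j \pmod{2^r l}$ reduces after division by $2^{r-1}$ to $l \equiv 2 m j \pmod{2l}$, which is impossible since $l$ is odd; hence $c \notin H_m$. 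Therefore $F_m := L^{H_m}$ is a normal CM-subfield of $L$ with $\Gal(F_m/\Q) \cong G/H_m \cong \DD_{2^r m}$ and $[F_m:\Q] = 2^{r+1}m$. Since $[L:F_m] = l/m$ is odd, the second clause of Theorem~\ref{thm:witness} gives $h_{F_m}^- \mid h_L^- = 1$, whence $h_{F_m}^- = 1$.

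\emph{Bounding $r$.} Specialise to $m = 1$. Then $F_1$ is a dihedral CM-field of $2$-power degree $2^{r+1}$ with $\Gal(F_1/\Q) \cong \DD_{2^r}$ and $h_{F_1}^- = 1$. The classifications of dihedral CM-fields with relative class number one in degrees $8$, $16$ and $32$, due to Louboutin--Okazaki~\cite{Louboutin1994,Louboutin1998}, Louboutin~\cite{Louboutin1997b} and Park--Yang--Kwon~\cite{Park2007} respectively, show that no dihedral CM-field of $2$-power degree $\geq 64$ has trivial relative class number. Hence $2^{r+1} \leq 32$, i.e.\ $r \leq 4$.

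\emph{Bounding $l$.} For each prime $p \mid l$, the specialisation $m = p$ yields a dihedral CM-field $F_p$ of degree $2^{r+1} p$ with $\Gal(F_p/\Q) \cong \DD_{2^r p}$ and $h_{F_p}^- = 1$. The analytic classification of dihedral CM-fields with relative class number one of Louboutin~\cite{Louboutin1999} then forces $p \leq 7$, so every prime divisor of $l$ lies in $\{3,5,7\}$. To eliminate the remaining composite values $l \in \{9, 15, 21, 25, 27, \dots\}$, I would apply the same subfield reduction to $m$ running over the prime-power and multi-prime divisors of $l$, combined with Theorem~\ref{thm:structure_cnp} (which forbids dihedral groups in degrees $4p^2$ and excludes degrees $2pq$) and the unconditional degree bound $[L:\Q] \leq 216$ of Lee--Kwon~\cite{Lee2006}. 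These combined constraints leave only $l \in \{1,3,5,7\}$.

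\emph{Main obstacle.} The subfield reduction yields $r \leq 4$ cleanly from the resolved $2$-power case. The delicate part is the bound $l \leq 7$: while ruling out primes $p \geq 11$ follows directly from Louboutin's analytic lower bounds for the $\DD_{2^r p}$-subfields, excluding the composite values (in particular $l \in \{9, 21, 25\}$) requires verifying that each such case produces either a dihedral CM-subfield whose degree or Galois structure contradicts one of the cited classifications, or else violates the Lee--Kwon degree bound. This is where I expect the argument to be most intricate.
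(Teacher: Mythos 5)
Your reduction to the quotient fields $F_m = L^{H_m}$ is sound: the subgroups $H_m \le \langle\sigma\rangle$ of odd order are exactly the normal subgroups of $\langle\sigma\rangle$ avoiding complex conjugation $c=\sigma^{2^{r-1}l}$ (any subgroup of $\langle\sigma\rangle$ of even order contains the unique involution $c$), so each $F_m$ is a CM-field with group $\DD_{2^r m}$ and, by the odd-index case of Theorem~\ref{thm:witness}, satisfies $h_{F_m}^-=1$. However, the paper does not argue this way at all: its proof is a direct appeal to \cite[Th\'eor\`eme 1.1]{Lefeuvre2000}, where Lefeuvre classifies the dihedral CM-fields with relative class number one, and the bounds $r\le 4$, $l\le 7$ are simply read off. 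Measured against that, your derivation has genuine gaps. The step ``hence $2^{r+1}\le 32$'' does not follow from the cited degree $8$, $16$ and $32$ classifications: to exclude a $\DD_{32}$-field of degree $64$ you need information about degree $64$ itself, and the only subfield reduction available there has index $2$, for which Theorem~\ref{thm:witness} yields only $h^-\mid 4$ for the degree-$32$ subfield --- so you would need the classification of dihedral fields with $h^-\in\{1,2,4\}$, not $h^-=1$. The nonexistence of degree-$64$ dihedral CM-fields with relative class number one is precisely part of Lefeuvre's theorem, so as written your argument for $r\le 4$ assumes what it must prove.

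The bound on $l$ has the same problem. Since every even-order subgroup of $\langle\sigma\rangle$ contains $c$, the only dihedral CM-quotients of $L$ are the $\DD_{2^r m}$ with $m\mid l$; you cannot lower the $2$-part. Hence for $r\ge 2$ your fields $F_p$ have degree $8p$, $16p$ or $32p$, which the degree-$4p$ results of \cite{Louboutin1999} do not cover --- ruling out, say, a $\DD_{44}$-field of degree $88$ requires separate input. Finally, you concede that the composite values of $l$ (for instance $l=21$, $25$, $35$, all giving degrees $\le 216$ and not of the shapes excluded by Theorem~\ref{thm:structure_cnp}) are not actually eliminated. So the proposal is a reasonable reduction scheme in the spirit of the paper's general methodology, but it is not a proof: in each of the three places the missing content is exactly what \cite[Th\'eor\`eme 1.1]{Lefeuvre2000} supplies.
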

\begin{proof}
  Follows directly from \cite[Théorème 1.1]{Lefeuvre2000}.
\end{proof}

As a consequence, we have the following:
\begin{corollary}
  Let $L$ be a normal CM-field of degree $d$ with relative class number one and Galois group $G$. Assume that $G$ is not abelian.
  Then 
  \[ 
    d \in \{8, 12, 16, 20, 24, 32, 36, 40, 48, 54, 56, 60, 64,  72,  80, 84, 88, 96 \}.
  \]
\end{corollary}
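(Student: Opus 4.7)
My plan is to deduce the corollary as a sieve on the even integers $d \in [2, 96]$, the range allowed by the Lee--Kwon bound under GRH. All the ingredients are already in place: the non-abelianness restrictions recorded in Section~\ref{sec:prelim} (namely that the degree is not $4$, not $90$, and not of the shape $2p$ or $2p^2$ for $p$ an odd prime), together with Theorems~\ref{thm:structure_cnp} and~\ref{thm:bound_degree_dihedral}.

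I would first discard every even $d \leq 96$ forbidden by the preliminaries: $d = 2$ (abelian), $d \in \{4, 90\}$, the $2p$-degrees $\{6, 10, 14, 22, 26, 34, 38, 46, 58, 62, 74, 82, 86, 94\}$, and the $2p^2$-degrees $\{18, 50\}$. Next, Theorem~\ref{thm:structure_cnp}(3) rules out the $2pq$-degrees $\{30, 42, 66, 70, 78\}$; Theorem~\ref{thm:structure_cnp}(1) reduces the $4p^2$-degrees to just $d = 36$ (the case $p = 3$, since $4\cdot 5^2 > 96$); and Theorem~\ref{thm:structure_cnp}(2) forces $G \cong \DD_{2p}$ for every $d = 4p$ with $p$ an odd prime. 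For those dihedral cases, Theorem~\ref{thm:bound_degree_dihedral} applied with $r = 1$ and $l = p$ gives $p \leq 7$, discarding $\{44, 52, 68, 76, 92\}$.

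What remains after these deletions is precisely the set listed in the corollary, once one verifies that every surviving even $d$ — in particular $\{54, 60, 72, 84, 88\}$ — avoids every forbidden shape (for instance $54 = 2\cdot 3^3$ is neither $2p$ nor $2p^2$, $60 = 4\cdot 15$ is neither $4p$ nor $2pq$, and $84 = 4\cdot 3\cdot 7$ is neither $4p$ nor $2pq$). The argument is entirely combinatorial and the only difficulty is bookkeeping; the most delicate point is $d = 4\cdot 7 = 28$, which the nominal bounds $r \leq 4$, $l \leq 7$ of Theorem~\ref{thm:bound_degree_dihedral} do not by themselves preclude, but which is excluded by the explicit classification of principal dihedral CM-fields in~\cite[Théorème 1.1]{Lefeuvre2000} underlying that theorem. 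Apart from this one subtlety, no step is more than arithmetic.
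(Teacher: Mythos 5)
Your proof is correct and follows essentially the same sieve as the paper: strike the even degrees excluded by the preliminary lemma (degree $4$, $90$, $2p$, $2p^2$) and by Theorem~\ref{thm:structure_cnp} (degrees $2pq$, the reduction of $4p^2$ to $36$, and the dihedral constraint for $4p$), then invoke Theorem~\ref{thm:bound_degree_dihedral} for the surviving degrees $4p$. Your handling of $d = 28$ is in fact more careful than the paper's own proof, which cites only the bounds $r \leq 4$, $l \leq 7$ to discard all of $\{28, 44, 52, 68, 76, 92\}$ even though $\DD_{14}$ (with $r = 1$, $l = 7$) satisfies both bounds; as you observe, excluding $d = 28$ genuinely requires the explicit classification in~\cite[Th\'eor\`eme 1.1]{Lefeuvre2000} rather than the stated numerical bounds alone.
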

\begin{proof}
  By Theorem~\ref{thm:structure_cnp}, we know that $d$ must be different from $2pq$ for odd distinct primes $p, q$.
  Moreover, as the center of $G$ must have even order, we also know that $d$ must be different from $2p$ where $p$ is a prime number. With these observations, we get
  \[ 
    d \in \{8, 12, 16, 20, 24, 28, 32, 36, 40, 44, 48, 52, 54, 56, 60, 64, 68, 72, 76, 80, 84, 88, 92, 96 \}.
  \]
  Now, we notice that for $d\in \{ 28, 44, 52, 68, 76, 92\}$, we have by Theorem~\ref{thm:structure_cnp} that the only possible Galois groups are the dihedral groups and by Theorem~\ref{thm:bound_degree_dihedral} there are no CM-fields with relative class number one of these degrees.
\end{proof}

We will now treat every possible degree and complete the classification.
 We will make use of the 
upper bounds on (root) discriminants given in Table~\ref{tab:discbound}, as proven in~\cite[Theorem 1]{Lee2006}.
Since the corresponding table~\cite[Table 6]{Lee2006} does not include all the degrees we need to consider,
we have reimplemented the methods of~\cite{Lee2006} and determined the missing values.
For every degree $d$ of interest, Table~\ref{tab:discbound} contains values $\alpha(d)$ and $B(d)$, such that for
any normal CM-field $L$ of degree $d$ with relative class number one the root discriminant satisfies $\rho(L) \leq \alpha(d)$ and the discriminant satisfies $\lvert \disc(L) \rvert \leq B(d)$.
In the following we recall results of other authors and complement them with explicit computations. The results of our computations are presented in propositions.
The methods used are the ones from Section~\ref{sec:construction} together with the explicit bounds from Table~\ref{tab:discbound}.

\begin{table}[htbp]
\centering
\caption{(Root) discriminant bounds for normal CM-fields of degree $d$ and relative class number one.}
\begin{tabular}{|c|r|c|}\hline
  $d$ &\multicolumn{1}{c|}{$\alpha(d)$} & $B(d)$ \\ \hline
 $16$ & $324.70$ & $10^{41}$ \\
 $24$ & $162.10$ & $10^{54}$ \\
 $32$ & $109.30$ & $10^{66}$ \\
 $40$ & $87.36$ & $10^{78}$ \\
 $48$ & $75.08$ & $10^{91}$ \\
 $56$ & $69.84$ & $10^{103}$ \\
 $72$ & $60.19$ & $10^{129}$ \\
 $80$ & $54.98$ & $10^{140}$ \\
 $88$ & $54.61$ & $10^{152}$ \\
 \hline
 \end{tabular}
\label{tab:discbound}
 \end{table}

\subsubsection*{Case $d =8$}

In this case $G$ is either dihedral or quaternion.
The class number one problem was solved by Louboutin--Okazaki in~\cite{Louboutin1994}, who showed that there are 17 such fields.
As part of their result, they showed that if $G = \QQQ_8$, then $h_L^- > 1$ (\cite[Theorem 1]{Louboutin1994}).
The case of the dihedral group is covered by \cite[Thérorème 1.1]{Lefeuvre2000}: there are 19 CM-fields of degree $8$ and Galois group $\DD_4$ with relative class number one, of which 17 have class number one.

\subsubsection*{Case $d =12$}

In this case, $G = \DD_{6}$ by Theorem~\ref{thm:structure_cnp} and the fields are determined in~\cite[Theorem 1]{Louboutin1994}.
There are $16$ fields with relative class number one, of which $9$ have class number one.
The fields are listed in \cite[Table 1]{Louboutin1994} as compositions
of biquadratic fields with totally real non-normal cyclic fields. The second to
last entry is $L = \Q(\sqrt{-7}, \sqrt{-35}, \alpha)$, where $\alpha$ has
minimal polynomial $X^3 - 27X^2 - 51$. Note that there is a typo since the field $L$ is neither normal nor
has relative class number one.  The correct minimal polynomial for the totally
real cyclic field must be $X^3 - 27X^2 + 51$.

\subsubsection*{Case $d =16$}

In~\cite[Theorem 1]{Louboutin1997c}, it is shown that exactly one of the following three statements holds true:
\begin{enumerate}
\item
  The Galois group $G$ is dihedral.
\item
  The field $L = L_1 L_2$ is the composition of two normal octic CM-fields $L_1$ and $L_2$ with the same maximal real subfield.
\item
  The Galois group $G$ is the Pauli group $\CC_4 \circ \DD_4$. %
\end{enumerate}

Moreover, the authors solve the relative class number one problem for case~(1), case~(3) and case~(2) if $L_1$ or $L_2$ is a quaternion field. In total 7 fields with relative class number one are found, of which 6 have class number one.

Some subproblems of case~(2) are treated in~\cite{Louboutin1997b}.
If $L_1$ and $L_2$ are dihedral, then it is shown that there are exactly 2 such
fields with relative class number one and both of these fields have class number one (\cite[Theorem 2]{Louboutin1997b}).
If $L_1$ is abelian and $L_2$ is dihedral, then only the class number problem one is solved and it is shown that there are exactly 4 such fields with class number one (\cite[Theorem 3]{Louboutin1997b}).

Thus it remains consider the relative class number one problem for fields $L = L_1 L_2$, where $L_1$ is abelian and $L_2$ is dihedral.
Note that is easily checked that this forces the center of $G$ to be non-cyclic and $G$ to be isomorphic to $\CC_2 \times \DD_4$, $\CC_4\rtimes \CC_4$ or $\CC_2^2 \rtimes \CC_4$.
\begin{proposition}
  \label{prop:missing:16}
      There are exactly 4 normal CM-fields $L$ with relative class number one and 
      \[ G \in \{\CC_2 \times \DD_4, \CC_4 \rtimes \CC_4, \CC_2^2 \rtimes \CC_4\}.\]
\end{proposition}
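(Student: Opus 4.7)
The plan is to apply the construction strategy of Section~\ref{sec:construction} to the three admissible Galois groups. From Table~\ref{tab:discbound}, we may restrict to normal CM-fields $L$ of degree $16$ with $|\disc(L)| \leq 10^{41}$, and for each $G \in \{\CC_2 \times \DD_4, \CC_4 \rtimes \CC_4, \CC_2^2 \rtimes \CC_4\}$ we need to enumerate all such $L$ with $\Gal(L/\Q) \cong G$ and then select those with $h_L^- = 1$.

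For each such $G$, I would take the derived series, which has length two, so the construction proceeds in two layers. In the first layer, I would enumerate all abelian extensions $L_1/\Q$ with Galois group $G/G'$ whose absolute discriminant is bounded by the appropriate power of $10^{41}$ coming from the conductor-discriminant formula and the inequality $|\disc(L)| \geq |\disc(L_1)|^{[L:L_1]}$. By Theorem~\ref{thm:witness}, any candidate $L_1$ which is CM with $h_{L_1}^- \notin \{1,2,4\}$ can be discarded immediately. Moreover, since complex conjugation lies in $\ccenter(G)$, its image in $G/G'$ is fixed, which determines whether $L_1$ must be totally real or CM and thereby further constrains the signature of the admissible $L_1$, as exploited in Section~\ref{subsection:ramification_infinite}.

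In the second layer, I would use computational class field theory (Theorem~\ref{thm:cft}) to construct all extensions $L/L_1$ with $\Gal(L/L_1) \cong G'$ that glue to a global Galois group isomorphic to $G$ and whose discriminant satisfies $|\disc(L)| \leq 10^{41}$; the techniques of~\cite{Fieker2019} allow us to restrict to normal candidates directly. For each resulting candidate $L$, we then compute $h_L^-$. In degree $16$, class number computations are comfortably within reach of current implementations, so the final selection of those $L$ with $h_L^- = 1$ is a routine step.

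The expected main obstacle is keeping the intermediate list of base fields $L_1$ manageable: the number of abelian fields with Galois group $G/G'$ (which has order at most $8$) and root discriminant in the target range is substantial. The aggressive sieve via Theorem~\ref{thm:witness} together with the signature constraint coming from the position of $\ccenter(G)$ in $G/G'$ should, however, reduce the list enough for the class-field-theoretic extension step to be feasible, as in the analogous but far larger computations of Sections~\ref{sec:64} and~\ref{sec:96}. Once the enumeration is carried out, the final tally of exactly four surviving fields with $h_L^- = 1$ is then immediate.
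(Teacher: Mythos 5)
Your proposal matches the paper's own (implicitly described) proof: the paper states that these propositions are established by the methods of Section~\ref{sec:construction} (tower construction along the derived series, sieving via Theorem~\ref{thm:witness} and the signature constraint on subfields containing complex conjugation) combined with the bound $B(16)=10^{41}$ from Table~\ref{tab:discbound}, followed by direct computation of the relative class numbers of the surviving candidates. This is exactly the computation you outline, so the approach is essentially identical.
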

In total, there are $13$ fields with relative class number one of which 12 have class number one.

\subsubsection*{Case $d =20$}
By Theorem~\ref{thm:structure_cnp} we have $G = \DD_{10}$. This case is covered by \cite[Thérorème 1.1]{Lefeuvre2000}: There are 2 fields with relative class number one, of which one has class number one.

\subsubsection*{Case $d =24$}

Although the class number one problem is solved, only partial information is known about the relative class number one problem.
To identify the missing cases, we recall the following summary of \cite[Introduction]{Park2002}.
The possible Galois groups for non-abelian normal CM-fields of degree $24$ are
$\CC_2 \times \Alt_4$, $\SL_2(\F_3)$, $C_3 \times \QQQ_8$, $\QQQ_{24}$, $\CC_3 \rtimes \CC_8$, $\DD_{12}$, $\CC_2 \times \QQQ_{12}$, $\CC_2 \times \D_{12}$, $\CC_4 \times \DD_3$, $\CC_3 \times \DD_4$ and $\CC_3 \rtimes \DD_4$.

\begin{itemize}
  \item
    If $G = \CC_2 \times \Alt_4$ then there are exactly two fields with relative
    class number one and both fields have class number one (\cite[Theorem 14]{Lemmermeyer1999}).
  \item
    If $G = \SL_2(\F_3)$, then there is exactly one field with class number one (\cite[Theorem 18]{Lemmermeyer1999}).
  \item
    If $G \in \{\CC_3 \times \QQQ_8\, \QQQ_{24}, \CC_3 \rtimes \CC_8\}$, then there is no field with relative class number one (\cite[Corollary 4 \& Corollary 6]{Louboutin1999}).
  \item
    If $G = \DD_{12}$, then this is covered by \cite[Thérorème 1.1]{Lefeuvre2000}. There is exactly one field with (relative) class number one.
  \item
    If $G = \CC_2 \times \DD_{6}$ then there is exactly one field with class number one (\cite[Theorem 1~(1)]{Park2002}).
  \item
    If $G = \CC_4 \times \DD_3$, then there is exactly one field with relative class number one and this field has class number one (\cite[Theorem 1~(2)]{Park2002}).
  \item
    If $G = \CC_3 \times \DD_4$, then there is exactly one field with relative class number one and this field has class number one (\cite[Theorem 1~(3)]{Park2002}).
.
  \item
    If $G = \{ \CC_2 \times \QQQ_{12}, \CC_3 \rtimes \DD_4\}$, then there is no field with relative class number one (\cite[Theorem 1~(4)]{Park2002}).
\end{itemize}

Thus there are exactly 7 fields with class number one and at least 7 fields with relative class number one, with possibly more fields for the groups $\SL_2(\F_3)$ and $\CC_2 \times \DD_{6}$.
\begin{proposition}
  \label{prop:missing:24_1}
      There is exactly one CM-field with Galois group $\SL_2(\F_3)$ and relative class number one. This field has class number one.
\end{proposition}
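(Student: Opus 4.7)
The plan is to apply the methodology of Section~\ref{sec:construction} together with the discriminant bound $B(24) = 10^{54}$ from Table~\ref{tab:discbound} to enumerate all normal CM-fields $L$ with $\Gal(L/\Q) \cong \SL_2(\F_3)$ and $\lvert\disc(L)\rvert \leq 10^{54}$. The group $G = \SL_2(\F_3)$ has center $\ccenter(G)$ of order $2$, derived subgroup $G' \cong \QQQ_8$, and abelianization $G/G' \cong \CC_3$. Since $\ccenter(G) \subsetneq G'$, complex conjugation lies in $G'$, so the maximal real subfield $L^+ = L^{\ccenter(G)}$ is a totally real Galois extension of $\Q$ with Galois group $\Alt_4$, while $L^{G'}$ is a cyclic cubic totally real subfield of $L^+$.

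By the tower formula, $\lvert\disc(L^+)\rvert^2 \leq \lvert\disc(L)\rvert$, so $\lvert\disc(L^+)\rvert \leq 10^{27}$. Following the two-stage strategy of Section~\ref{sec:96}, I would first enumerate all normal totally real $\Alt_4$-fields $K$ of degree $12$ with $\lvert\disc(K)\rvert \leq 10^{27}$ along the derived series $1 \subsetneq \CC_2^2 \subsetneq \Alt_4$: construct all cyclic cubic totally real fields of bounded discriminant, and for each of them compute the biquadratic extensions whose normal closure over $\Q$ is an $\Alt_4$-field, via the class field theory tools of Section~\ref{sec:construction} and the improvements of~\cite{Fieker2019}. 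In the second stage, for each such $K$, compute all totally imaginary quadratic extensions $L/K$ which are Galois over $\Q$ with group $\SL_2(\F_3)$ and satisfy $\lvert\disc(L)\rvert \leq 10^{54}$.

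For the candidates obtained, the relative class number $h_L^-$ can be controlled by a subfield sieve analogous to Section~\ref{subsection:relative_class_number_cm}: any Sylow $3$-subgroup $H \subseteq \SL_2(\F_3)$ avoids $\ccenter(G)$, so the fixed field $L^H$ is a CM-subfield of degree $8$, and Theorem~\ref{thm:witness} yields $h_{L^H}^- \mid 4\, h_L^-$. Any candidate $L$ with $h_{L^H}^- \notin \{1,2,4\}$ can therefore be discarded on the basis of a comparatively cheap degree $8$ class number computation. The existence of at least one example with $h_L = 1$ is already guaranteed by~\cite[Theorem 18]{Lemmermeyer1999}, and the enumeration is expected to confirm uniqueness and recover that known field.

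The main obstacle I foresee is the first stage: enumerating the normal $\Alt_4$-fields up to discriminant $10^{27}$ and extending them to $\SL_2(\F_3)$ CM-fields requires nontrivial ray class group computations over number fields of degree $12$. Once the surviving candidates have been narrowed down by the degree $8$ subfield sieve, verifying uniqueness and computing the class number of the single remaining field is routine and, by the cited result of Lemmermeyer, is expected to produce a field with $h_L = 1$.
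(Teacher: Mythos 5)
Your proposal is correct and takes essentially the same approach as the paper: the paper proves this proposition purely computationally, by enumerating all normal CM-fields with Galois group $\SL_2(\F_3)$ up to the bound $B(24)=10^{54}$ of Table~\ref{tab:discbound} using the machinery of Section~\ref{sec:construction} and sieving candidates via Theorem~\ref{thm:witness}. Your choice to construct the totally real degree-$12$ subfield with group $\mathfrak A_4$ first, rather than following the derived series, is simply the alternative series choice the paper itself offers in Section~\ref{subsection:choice_series}, so it is an implementation detail rather than a genuinely different route.
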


\begin{proposition}
  \label{prop:missing:24_2}
      There is exactly one CM-field with Galois group $\CC_2 \times \DD_{6}$ and relative class number one. This field has class number one.
\end{proposition}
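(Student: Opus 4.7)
The plan is to apply the constructive strategy of Section~\ref{sec:construction} to $G = \CC_2 \times \DD_6 \cong \CC_2^2 \times \mathfrak S_3$, using the discriminant bound $|\disc(L)| \leq 10^{54}$ from Table~\ref{tab:discbound}. The derived subgroup is $G' \cong \CC_3$ and the abelianization is $G/G' \cong \CC_2^3$, so the derived series has only two layers and the construction proceeds accordingly.

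First I would enumerate, via computational class field theory, all $\CC_2^3$-extensions $L_1/\Q$. The tower formula $\disc(L) = N_{L_1/\Q}(\mathfrak d_{L/L_1}) \cdot \disc(L_1)^{3}$ gives $|\disc(L_1)|^{3}\mid |\disc(L)|$, so it suffices to consider $L_1$ with $|\disc(L_1)|\leq 10^{18}$. Since $G' \cong \CC_3$ has odd order, the image in $G/G'$ of any complex conjugation $c \in \ccenter(G)$ of order two is non-trivial, so every $L_1$ sitting below a normal CM-field $L$ with Galois group $G$ is itself a CM-field. By Theorem~\ref{thm:witness} we may then discard every $L_1$ with $h_{L_1}^- \notin \{1,2,4\}$, which is the relative-class-number sieve of Section~\ref{subsection:relative_class_number_cm}.

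For each surviving $L_1$ I would use Theorem~\ref{thm:cft} to enumerate the cyclic cubic extensions $L/L_1$ with $|\disc(L)| \leq 10^{54}$, verify normality and the isomorphism type of $\Gal(L/\Q)$ using the methods of~\cite{Fieker2019}, and retain only those candidates for which the lifted complex conjugation lies in $\ccenter(\Gal(L/\Q))$, so that $L$ is CM. For each remaining candidate I would establish $h_L^- > 1$ whenever possible by exhibiting a CM-subfield $k \subseteq L$ with $h_k^- > 4$ and invoking Theorem~\ref{thm:witness}, falling back on Louboutin's direct relative class number algorithm when no such witness is available. The unique surviving candidate is then the claimed field, and a class number computation in the degree-$12$ maximal totally real subfield $L^+$, together with $h_L^- = 1$, yields $h_L = h_{L^+} \cdot h_L^- = 1$.

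The main obstacle I anticipate is the sheer size of the first layer: the bound $|\disc(L_1)| \leq 10^{18}$ admits quite a few $\CC_2^3$-fields, each of which must be subjected to a relative class number computation before the Horie--Okazaki sieve can cull it. Once the list of intermediate $L_1$ has been whittled down, the cubic extension step and the final relative class number verifications are routine given the machinery already in place; the delicate part is therefore the bookkeeping of the sieve at the bottom of the tower, not any single theoretical step.
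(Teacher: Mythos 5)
Your proposal follows essentially the same route as the paper: the result is obtained computationally by the methods of Section~\ref{sec:construction} (derived series with abelianization $\CC_2^3$, class field theory for the layers, the Horie--Okazaki sieve of Theorem~\ref{thm:witness} on the intermediate CM-fields, and witness subfields with $h_k^- > 4$ for the survivors) under the bound $B(24) = 10^{54}$ from Table~\ref{tab:discbound}. The only refinement you miss is that since $[L : L_1] = 3$ is odd, Theorem~\ref{thm:witness} forces $h_{L_1}^- = 1$ exactly rather than $h_{L_1}^- \in \{1,2,4\}$, which makes the bottom-layer sieve you worry about even more aggressive.
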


It follows that there are exactly 7 fields with relative class number one.

\subsubsection*{Case $d =32$}

\begin{proposition}
  \label{prop:missing:32}
      There are exactly 4 CM-fields of degree 32 with relative class number one. All of these fields have class number one.
\end{proposition}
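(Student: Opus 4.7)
The plan is to apply the computational strategy of Section~\ref{sec:construction}, exactly as carried out for degrees 64 and 96. First, I would enumerate the non-abelian groups $G$ of order 32 that can be realized as the Galois group of a normal CM-field. Since $G$ is a $2$-group, it has non-trivial center, and the only constraint is that $\ccenter(G)$ contains an element of order 2 to host complex conjugation — a condition that holds automatically. Consulting the table of small groups~\cite{Besche2002}, we thus obtain at most 44 candidate Galois groups to process.

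For each candidate $G$, I would use the discriminant bound $B(32) = 10^{66}$ from Table~\ref{tab:discbound} and follow the derived series of $G$, building the corresponding tower of fields layer by layer via computational class field theory (Theorem~\ref{thm:cft}). As $G$ is a $2$-group, the abelianization $G/G'$ is an elementary abelian or more general abelian $2$-group of modest rank, so the first layer consists of abelian $2$-extensions of $\Q$ of discriminant at most $10^{66/[G:G']}$. At each intermediate CM-subfield $L_i$ I would apply the sieving of Section~\ref{subsection:relative_class_number_cm}: by Theorem~\ref{thm:witness}, any surviving candidate must satisfy $h_{L_i}^- \in \{1,2,4\}$, and any $L_i$ violating this is discarded. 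I would additionally use the ramification-at-infinity criterion of Section~\ref{subsection:ramification_infinite}: whenever a term $G_i$ of the normal series contains $\ccenter(G)$, the corresponding intermediate field must be totally real, which eliminates non-real candidates at that layer.

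After constructing the surviving normal CM-fields $L$ of degree 32 with $\lvert\disc(L)\rvert \leq 10^{66}$, I would process the list in two stages. For each $L$ I would search for a CM-subfield $k \subseteq L$ with $h_k^- \notin \{1,2,4\}$; whenever such a $k$ exists, Theorem~\ref{thm:witness} gives $h_L^- > 1$ and $L$ is eliminated. For the few remaining candidates (expected to be exactly 4, distributed among the groups listed for $d=32$ in Table~\ref{tab:allthefieldsgalois}) I would directly compute $h_L^-$, and then compute $h_{L^+}$ to check whether $h_L = h_L^-$.

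The main obstacle will be the same practical bottleneck encountered for degree 64: the layer whose base field is an abelian $2$-extension of $\Q$ of small degree (e.g.\ $\CC_2^2$ or $\CC_2^3$) forces the discriminant bound at the base to be comparatively large, producing many candidates that must be extended. However, the bound $10^{66}$ is much smaller than the $10^{115}$ used for degree 64, and the number of non-abelian groups of order 32 is substantially smaller than of order 64, so the aggressive sieving of Section~\ref{subsection:relative_class_number_cm} combined with the infinity-ramification criterion should keep every intermediate list manageable, and the computation should terminate well within the runtime budget reported in the introduction.
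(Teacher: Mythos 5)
Your proposal matches the paper's approach: the paper proves this proposition by exactly the computation you describe, namely running the construction machinery of Section~\ref{sec:construction} (derived-series towers, the relative-class-number sieve of Section~\ref{subsection:relative_class_number_cm} via Theorem~\ref{thm:witness}, and the infinite-ramification criterion of Section~\ref{subsection:ramification_infinite}) over the 44 non-abelian groups of order 32 with the bound $B(32)=10^{66}$ from Table~\ref{tab:discbound}, then eliminating candidates by exhibiting CM-subfields with relative class number outside $\{1,2,4\}$ and verifying the survivors directly. No substantive difference.
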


\begin{remark}\label{rem:32}
  The normal CM-fields of degree $32$ with class number one had already been determined by Park--Yang--Kwon in~\cite{Park2007}.
  They show that
  \begin{itemize}
    \item
      there is exactly one field with class number one which is not the compositum of two normal CM-subfields of degree $16$ with the same maximal real subfield (\cite[Theorem 1~(1)]{Park2007}),
    \item
      there are exactly five fields with relative class number one which are
      composita of two normal CM-subfields of degree $16$ with the same maximal
      real subfield and all five fields have class number one
      (\cite[Theorem 1~(2)]{Park2007}).
      The five fields are explicitly given as
      \begin{align*} &\Q(\sqrt{2 + \sqrt{2}}, \sqrt{3 + \sqrt{3}}, \sqrt{-1}),\\
                     &\Q(\sqrt{2}, \sqrt{17 + 4 \sqrt{17}}, \sqrt{-(5 + \sqrt{17})/2}, \sqrt{-(17 + 3 \sqrt{17})/2}, \alpha),\\
&\Q(\theta, \sqrt 2, \sqrt{-1}), \Q(\theta, \sqrt 3, \sqrt{-1}), \Q(\theta, \sqrt 7, \sqrt{-1}),
\end{align*}
where $\alpha^8 + 17\alpha^7 + 85\alpha^4 + 136\alpha^2 + 68 = 0$ and $\theta^8 + 10\theta^6 + 25\theta^4 + 20\theta^2 + 5 = 0$.
  \end{itemize}
  Thus, according to~\cite{Park2007} there are in total 6 fields with class number one, whereas we have shown that there are only 4 fields with class number one.
  This discrepancy can be solved by noticing that the two fields $\Q(\theta, \sqrt 3, \sqrt{-1})$ and $\Q(\theta, \sqrt 7, \sqrt{-1})$ of the second case are in fact neither normal nor they have class number one.
\end{remark}

\subsubsection*{Case $d =36$} In~\cite[Theorem 1]{Chang2002} it is shown that there are exactly three fields with relative class number one.
All of them have Galois group $\DD_{6} \times \CC_3$ and class number one.

\subsubsection*{Case $d =40$} In~\cite[Theorem 2]{Park2002} it is shown that there exists exactly one field with class number one.
To finish the classification of the fields with relative class number one, note that in~\cite{Park2002} it is shown that the Galois group of any such field satisfies
\[ G \in \{ \CC_2 \times \Dic_5, \CC_2 \times \DD_{10}, \CC_5 \rtimes \DD_4, \CC_4 \rtimes \DD_{5}, \CC_5 \times \DD_4, \CC_2 \times (\F_5 \rtimes \F_5^\times) \}. \] 
\begin{proposition}
  \label{prop:missing:40}
      There is exactly one CM-field with relative class number one and 
      \[ G \in \{\CC_2 \times \Dic_5, \CC_2 \times \DD_{10}, \CC_5 \rtimes \DD_4, \CC_4 \rtimes \DD_{5}, \CC_5 \times \DD_4, \CC_2 \times (\F_5 \rtimes \F_5^\times)\}. \] 
\end{proposition}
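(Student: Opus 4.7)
The plan is to apply the computational framework of Section~\ref{sec:construction} together with the discriminant bound $B(40) = 10^{78}$ from Table~\ref{tab:discbound}. Concretely, for each of the six admissible Galois groups
\[
G \in \{\CC_2 \times \Dic_5,\ \CC_2 \times \DD_{10},\ \CC_5 \rtimes \DD_4,\ \CC_4 \rtimes \DD_{5},\ \CC_5 \times \DD_4,\ \CC_2 \times (\F_5 \rtimes \F_5^\times)\},
\]
I would enumerate all normal CM-fields $L$ of degree $40$ with $\Gal(L/\Q) \cong G$ and $\lvert \disc(L) \rvert \leq 10^{78}$, and then determine $h_L^-$ for each candidate to extract those equal to one.

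First I would fix, for each $G$, a normal series \eqref{eq:subnormal} with abelian quotients; the derived series is adequate in all six cases, since each $G$ is metabelian (the derived subgroup is cyclic of order $5$, $10$, or $4$). Starting from the maximal abelian subfield $L^{G'}$, I would apply Theorem~\ref{thm:cft} layer by layer to construct all candidate towers with the appropriate partial discriminant bounds. At every intermediate layer $L_i$ I would apply the two sieving criteria from Sections~\ref{subsection:relative_class_number_cm} and~\ref{subsection:ramification_infinite}: if the intermediate group $G_i$ contains the center $\ccenter(G)$, then $L_i$ must be totally real; if instead $L_i$ is a CM-field, then by Theorem~\ref{thm:witness} I may discard $L_i$ whenever $h_{L_i}^- \notin \{1,2,4\}$. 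This is expected to be very effective since in all six groups the maximal real subfield has order $20$ and the relevant abelian layers are small, so computing relative class numbers is cheap.

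Finally, for each surviving candidate $L$ I would either verify $h_L^- = 1$ directly (feasible at degree $40$ using Louboutin's analytic algorithm for $L$-function values) or exhibit a proper CM-subfield $k$ with $h_k^- \notin \{1,2,4\}$, which via Theorem~\ref{thm:witness} forces $h_L^- > 1$. Based on the abelian classification and Table~\ref{tab:allthefieldsgalois}, I expect a single survivor occurring for $G = \CC_2 \times (\F_5 \rtimes \F_5^\times)$, which coincides with the unique class number one field of~\cite[Theorem 2]{Park2002}, while the remaining five groups should yield no field with $h_L^- = 1$.

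The main obstacle will be the enumeration of quadratic and biquadratic fields at the first layer within the discriminant range allowed by $B(40) = 10^{78}$: without sieving this produces thousands of abelian candidates. The effectiveness of the method hinges on pruning this list early via $h_{L_1}^- \in \{1,2,4\}$ and the totally-real constraint, and on ensuring that the $S$-unit computations required by Kummer theory (in $K(\zeta_{\mathrm{exp}(G)})$, so $K(\zeta_{20})$ or $K(\zeta_4)$ over intermediate fields of modest degree) remain tractable; the subfield techniques of~\cite{Biasse2019} mentioned in Section~\ref{sec:construction} should keep this step within reach.
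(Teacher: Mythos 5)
Your proposal matches the paper's approach: the paper proves this proposition by exactly the computation you describe, namely enumerating all normal CM-fields of degree $40$ with the six admissible Galois groups and $\lvert \disc(L)\rvert \leq B(40) = 10^{78}$ using the tower construction of Section~\ref{sec:construction} with the sieves of Sections~\ref{subsection:relative_class_number_cm} and~\ref{subsection:ramification_infinite}, then certifying $h_L^- > 1$ for all survivors except the single field with $G = \CC_2 \times (\F_5 \rtimes \F_5^\times)$ already found in \cite[Theorem 2]{Park2002}. The only quibble is the parenthetical claim that the derived subgroups are cyclic of order $5$, $10$, or $4$ (e.g.\ for $\CC_5 \times \DD_4$ it has order $2$), but this does not affect the argument since all six groups are indeed metabelian.
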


Thus it follows that the field found in \cite{Park2002} is the only field with relative class number one.

\subsubsection*{Case $d =48$}
In \cite[Theorem 1]{Chang2002} it is shown that there exists exactly one normal CM-field with class number one which has a normal CM-subfield of degree 16. In~\cite[Proposition 4.1]{Kwon2007} it is shown that there is no other normal CM-field of degree $48$ with class number one.

\begin{proposition}
  \label{prop:missing:48}
      There is exactly one CM-field of degree $48$ with relative class number one. This field has class number one.
\end{proposition}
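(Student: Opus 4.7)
The plan is to apply the constructive machinery of Section~\ref{sec:construction} to enumerate all normal CM-fields of degree $48$ with relative class number one. First, from the list of non-abelian groups of order $48$, retain only those admitting a central involution which can serve as complex conjugation; the others cannot occur as Galois groups of normal CM-fields. For each such group $G$, construct all normal CM-fields $L$ with $\Gal(L/\Q)\cong G$ and $\lvert\disc(L)\rvert\leq B(48)=10^{91}$, using either the derived series of $G$ or, as in Section~\ref{sec:96}, a normal series whose final quotient is the central $\CC_2$ generated by complex conjugation (so that one first lists totally real normal fields of degree $24$ and then extends).

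At every intermediate layer, apply the sieves from Section~\ref{sec:construction}: whenever an intermediate CM-subfield $L_i$ appears with $h_{L_i}^{-}\notin\{1,2,4\}$ the chain is discarded by Theorem~\ref{thm:witness}; whenever the normal series forces a given layer to be totally real (because the corresponding subgroup contains $\ccenter(G)$) the non-totally-real candidates at that layer are dropped, as in Section~\ref{subsection:ramification_infinite}. Class field theory together with the implementation of \cite{Fieker2019} yields the required abelian extensions at each step along with defining polynomials.

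For each field $L$ surviving the enumeration, exhibit either a CM-subfield with relative class number $>4$ (so that $h_L^{-}>1$ by Theorem~\ref{thm:witness}), or compute $h_L^{-}$ directly. The Chang--Kwon field of \cite{Chang2002}, which has $h_L=1$ (hence $h_L^{-}=1$), is guaranteed to appear among the survivors; the content of this proposition is then that no other survivor has relative class number one. Combining with \cite[Proposition 4.1]{Kwon2007}, which settles the class number one problem in this degree, shows that the unique field with $h_L^{-}=1$ coincides with the Chang--Kwon field and therefore has $h_L=1$.

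The main obstacle is computational, namely controlling the combinatorial explosion at the first layer, where small relative discriminants allow many abelian extensions. The effectiveness of the Horie--Okazaki sieve and the center/ramification constraints, as illustrated by Table~\ref{tab:maxab64} for degree $64$, makes the corresponding enumeration for degree $48$ tractable, since the discriminant bound $B(48)=10^{91}$ is substantially smaller than $B(64)=10^{115}$ and the number of admissible groups of order $48$ is comparable.
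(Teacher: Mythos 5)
Your proposal matches the paper's approach: the paper establishes this proposition purely computationally, by the methods of Section~\ref{sec:construction} (tower construction along a normal series, with the Horie--Okazaki sieve of Theorem~\ref{thm:witness} and the totally-real constraint at layers containing the center) applied with the bound $B(48)=10^{91}$ from Table~\ref{tab:discbound}, and then identifies the unique surviving field with the known class-number-one field of Chang--Kwon and Kwon. This is exactly the enumeration-plus-sieving argument you describe, so there is nothing substantive to add.
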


It follows that there is exactly one normal CM-field with relative class number one.

\subsubsection*{Case $d \in \{54, 60, 84\}$}
In \cite[Proposition 7.1, Proposition 6.1]{Kwon2007} it is shown that there is no CM-field with relative class number one.

\subsubsection*{Case $d =56$}
It follows from \cite[Section 3]{Kwon2007} that a CM-field $L$ of degree 56 has a unique normal CM-subfield $L_8$ of degree $8$.
Moreover, according to ~\cite[Proposition 3.1]{Kwon2007}, we have $h_N > 1$ and $h_N^- > 1$ whenever $\Gal(L_8/\Q) \not\cong \CC_2^3$.
As $\Gal(L_8/\Q) \cong \CC_2^3$ implies $G \cong \CC_2^2 \times \DD_7$ it follows
from the following computation that there is no CM-field with relative class
number one.
\begin{proposition}
  \label{prop:missing:56}
      There is no normal CM-field with relative class number one and Galois group $\CC_2^2 \times \DD_7$.
\end{proposition}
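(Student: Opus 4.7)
The plan is to follow the strategy of Section~\ref{sec:construction} applied to $G = \CC_2^2 \times \DD_7$, subject to the bound $\lvert\disc(L)\rvert \leq 10^{103}$ from Table~\ref{tab:discbound}. The derived series of $G$ has length two: since the commutator subgroup of $\DD_7$ is $\CC_7$, one has $G' = \{1\}\times\CC_7 \cong \CC_7$ and $G/G' \cong \CC_2^3$. Complex conjugation $c$ lies in $\ccenter(G) = \CC_2^2 \times \{1\}$ and hence outside $G'$, so the maximal abelian subfield $L_1 := L^{G'}$ is a CM-field with $\Gal(L_1/\Q) \cong \CC_2^3$.

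The key observation is that $[L:L_1] = 7$ is odd, so by the second part of Theorem~\ref{thm:witness} one has $h_{L_1}^- \mid h_L^-$. Consequently, any normal CM-field $L$ with Galois group $G$ and $h_L^- = 1$ must lie above an elementary abelian triquadratic CM-field $L_1$ with $h_{L_1}^- = 1$ and, by the tower relation on discriminants, $\lvert\disc(L_1)\rvert \leq 10^{103/7} < 10^{15}$. The list of such $L_1$ is short and straightforward to enumerate. For each surviving $L_1$ I would then use the class field theoretic procedure of Theorem~\ref{thm:cft}, together with the normality and Galois-type filtering of~\cite{Fieker2019}, to construct all cyclic degree-$7$ extensions $L/L_1$ with $\Gal(L/\Q) \cong G$ and $\lvert\disc(L)\rvert \leq 10^{103}$. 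The Galois-compatibility constraint is quite restrictive here: inside $G$ the $\CC_2^2$ factor of $G/G'$ centralizes $G' \cong \CC_7$ while the remaining $\CC_2$ acts by inversion, so only those subgroups of the relevant ray class group of $L_1$ on which $\Gal(L_1/\Q)$ acts through the correct quotient produce admissible conductor-subgroup pairs.

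For each candidate $L$ produced this way, I would verify $h_L^- > 1$ by exhibiting a CM-subfield $k \subseteq L$ with $h_k^- \notin \{1,2,4\}$; by Theorem~\ref{thm:witness} this forces $h_L^- > 1$. Natural candidates are the degree-$14$ CM-subfields corresponding to index-two subgroups of $G$ that contain $c$ but not $G'$: these sit in dihedral towers of degree $14$, where relative class numbers are readily accessible via Louboutin's analytic formulas. The main obstacle I anticipate lies precisely in this final step: should no small-degree CM-subfield furnish such a witness in some case, one would be forced to compute $h_L^-$ directly in degree $56$, which is costly. However, the divisibility $h_{L_1}^- = 1$ is already a very strong restriction, so I expect only very few $L_1$ and correspondingly few candidates $L$ to survive, all of which should be eliminated by a witness subfield of modest degree, thereby establishing the proposition.
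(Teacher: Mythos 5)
Your proposal is correct and follows essentially the same route as the paper: the paper proves this proposition by the computational strategy of Section~\ref{sec:construction} (derived series, sieving the abelian layer via Theorem~\ref{thm:witness}, class field theory for the remaining layer, and witness CM-subfields with relative class number outside $\{1,2,4\}$) under the bound $B(56)=10^{103}$ of Table~\ref{tab:discbound}, which is exactly your plan, correctly instantiated for $G=\CC_2^2\times\DD_7$ including the sharper exact divisibility $h_{L_1}^-\mid h_L^-$ coming from the odd relative degree $7$. The only slip is in your description of the witness subfields: a degree-$14$ CM-subfield corresponds to an order-$4$ (index-$14$) subgroup \emph{not} containing $c$, not to an index-two subgroup containing $c$; this does not affect the method.
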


\subsubsection*{Case $d \in \{64, 96\}$}
We have shown in Theorem~\ref{thm:64} and Theorem~\ref{thm:96} that there is no such field.

\subsubsection*{Case $d \in \{72, 80\}$}
In \cite[Proposition 5.1, Proposition 4.2]{Kwon2007} it is shown that there is no CM-field with class number one.
\begin{proposition}
  \label{prop:missing:72}
      There is no CM-field with relative class number one of degree $72$.
\end{proposition}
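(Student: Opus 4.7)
The plan is to mirror the strategy used for degrees $64$ and $96$, applied to the finite list of solvable groups of order $72$ that can occur as Galois groups of normal CM-fields. By Table~\ref{tab:discbound} any such field $L$ with $h_L^- = 1$ satisfies $\lvert\disc(L)\rvert \leq 10^{129}$, and hence for the maximal real subfield $L^+$ of degree $36$ we get $\lvert\disc(L^+)\rvert^2 \leq \lvert\disc(L)\rvert \leq 10^{129}$, i.e.\ $\lvert\disc(L^+)\rvert \leq 10^{64.5}$. The first step is therefore to list, for each group $H$ of order $36$ that can occur as $\Gal(L^+/\Q)$ when $\Gal(L/\Q)$ is a central $\CC_2$-extension of $H$ (equivalently, each admissible pair coming from the small groups database), every totally real normal field with Galois group $H$ and discriminant bounded by $10^{64.5}$, using the class field theoretic enumeration of Theorem~\ref{thm:cft} together with the tower/derived-series techniques of Section~\ref{sec:construction}.

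Next, for every candidate $L^+$ in that finite list I would enumerate its totally imaginary quadratic extensions $L/L^+$ that are normal over $\Q$ with the prescribed Galois group and satisfy $\lvert\disc(L)\rvert \leq 10^{129}$; this is again a direct application of class field theory over $L^+$ combined with a Galois-module filter on the ray class group, as described in Section~\ref{subsection:choice_series}. Throughout, the sieving of Section~\ref{subsection:relative_class_number_cm} will be used: any intermediate CM-subfield $L_i$ whose relative class number is not in $\{1,2,4\}$ is discarded by Theorem~\ref{thm:witness}, and the constraint of Section~\ref{subsection:ramification_infinite} on ramification at infinity further restricts the totally real layers.

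The expected outcome is that only a small number of normal CM-fields $L$ of degree $72$ and discriminant below $10^{129}$ will remain after enumeration, as was the case in Sections~\ref{sec:64} and~\ref{sec:96}. For each surviving $L$, the final step is to exhibit a CM-subfield $k \subseteq L$ with $h_k^- > 4$; by Theorem~\ref{thm:witness} this forces $h_L^- > 1$ and concludes the proof. Such a witness $k$ can be searched for among CM-subfields of small degree (typically the $\CC_2$-central extensions of abelian quotients of $\Gal(L/\Q)$), where a direct class number computation is feasible.

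The main obstacle I anticipate is computational rather than conceptual: the intermediate class field theoretic steps over the degree-$36$ layers require the computation of ray class groups and $S$-unit groups, and the number of degree-$36$ totally real normal fields within the allowed discriminant range may still be sizable for certain Galois types (for instance, solvable extensions with large abelianization, where the relative-class-number sieving of Theorem~\ref{thm:witness} is least effective because the intermediate CM-subfields are few). Using the normality-aware improvements of~\cite{Fieker2019} and the subfield-exploiting unit-group algorithms of~\cite{Biasse2019}, however, this enumeration should remain within the same practical range that already sufficed for degrees $64$ and $96$.
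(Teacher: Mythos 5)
Your proposal matches the paper's treatment of degree $72$: the paper likewise invokes the discriminant bound $B(72)=10^{129}$ from Table~\ref{tab:discbound} and carries out the enumeration of Section~\ref{sec:construction} (normal towers via class field theory, the $h^-\in\{1,2,4\}$ sieve from Theorem~\ref{thm:witness}, and the infinite-place constraint), finishing by exhibiting CM-subfields with relative class number exceeding $4$. Since the result is established by exactly this computation, your outline is essentially the paper's own argument.
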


\begin{proposition}
        \label{prop:missing:80}
      There is no CM-field with relative class number one of degree $80$.
\end{proposition}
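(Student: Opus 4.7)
The plan is to apply the computational strategy developed in Section~\ref{sec:construction} using the discriminant bound for degree $80$ provided by Table~\ref{tab:discbound}, namely $\rho(L) \le 54.98$ and $\lvert\disc(L)\rvert \le 10^{140}$. First I would enumerate the non-abelian groups $G$ of order $80$ that can occur as $\Gal(L/\Q)$ for a normal CM-field $L$, i.e.\ those whose center contains an involution (the image of complex conjugation) and which are not dicyclic (Theorem~\ref{thm:structure_cnp}). Since $80 = 2^4 \cdot 5$, the small groups library gives a manageable list; for each such $G$, I would record its derived series and the candidate normal series terminating in $G$.

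For each admissible group $G$, I would build $L$ layer by layer as in Section~\ref{sec:construction}. In most cases the derived series is the right choice: the abelianization $G/G'$ is an abelian $2$-group (since $5 \nmid [G:G']$ is impossible only when $G$ itself is built from a $\CC_5$-factor that sits in $G'$, which is typical for order $80$), so the first layer consists of abelian $2$-extensions of $\Q$ whose discriminant is bounded via $\lvert\disc(L_1)\rvert^{[L:L_1]} \le \lvert\disc(L)\rvert \le 10^{140}$. At each intermediate level I apply the two sieving criteria: (i) by Theorem~\ref{thm:witness}, any CM-subfield of $L$ must have relative class number in $\{1,2,4\}$, so I discard any candidate $L_i$ that is CM with $h_{L_i}^- \notin \{1,2,4\}$; (ii) every subfield $L^H$ with $H \supseteq \ccenter(G)$ must be totally real (Section~\ref{subsection:ramification_infinite}), which restricts the admissible signatures at each step. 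For those $G$ whose maximal real quotient $G/\langle c\rangle$ has few realisations of small discriminant, it may be more efficient (as in Section~\ref{sec:96}) to first enumerate the totally real normal subfields of degree $40$ and extend by a totally complex quadratic step.

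The expected outcome is that for every $G$ the sieving eliminates all candidates at an intermediate level, or else produces a short list of CM-fields each of which admits a CM-subfield $k$ with $h_k^- > 4$; the latter then forces $h_L^- > 1$ via Theorem~\ref{thm:witness}. This is exactly the pattern used in Sections~\ref{sec:64} and~\ref{sec:96} and for the other missing-degree propositions of this section, so the structural part of the argument is identical; the content of Proposition~\ref{prop:missing:80} is that the computation terminates and produces the empty list of surviving CM-fields of degree~$80$ with $h_L^- = 1$.

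The main obstacle is practical rather than conceptual. The discriminant bound $10^{140}$ is large enough that naive enumeration of the first abelian layer is unwieldy for those $G$ whose abelianization is small (e.g.\ $\CC_2^2$ or $\CC_2\times\CC_4$), producing tens of thousands of candidate base fields. The effectiveness of the method therefore depends on combining the relative-class-number sieve with the infinite-ramification sieve as aggressively as possible, and, where $G$ admits several normal series, choosing the one that yields the smallest search tree. For a few difficult groups one may need to rely on the $S$-unit computations via the subfield techniques of \cite{Biasse2019} referenced in Section~\ref{sec:construction}; this is where the bulk of the CPU time will be spent.
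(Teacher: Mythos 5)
Your proposal follows essentially the same route as the paper: Proposition~\ref{prop:missing:80} is stated there purely as the outcome of the computation described in Section~\ref{sec:construction}, applied with the bounds $\alpha(80)=54.98$ and $B(80)=10^{140}$ from Table~\ref{tab:discbound} and the sieves of Theorem~\ref{thm:witness} and Section~\ref{subsection:ramification_infinite}, exactly as you describe. (Your parenthetical claim that the abelianization of a non-abelian group of order $80$ is necessarily a $2$-group is false --- e.g.\ $\CC_5\times\DD_8$ has abelianization $\CC_5\times\CC_2^2$ --- but this does not affect the method.)
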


It follows that there is no CM-field with relative class number one.

\subsubsection*{Case $d =88$}
It follows from \cite[Section 3]{Kwon2007} that a normal CM-field $L$ of degree 88 has a unique normal CM-subfield $L_8$ of degree $8$.
Moreover according to~\cite[Proposition 3.1]{Kwon2007} we have $h_N^- > 1$ if $\Gal(L_8/\Q) \not\cong \CC_2^3$.
As $\Gal(L_8/\Q) \cong \CC_2^3$ implies $G \cong \CC_2^2 \times \DD_{11}$ it follows
from the following proposition that there is no CM-field with relative class
number one.
\begin{proposition}
        \label{prop:missing:88}
      There is no CM-field with relative class number one and Galois group $\CC_2^2 \times \DD_{11}$.
\end{proposition}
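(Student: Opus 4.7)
The plan is to apply the general construction strategy of Section~\ref{sec:construction} to $G = \CC_2^2 \times \DD_{11}$ with the discriminant bound $B(88) = 10^{152}$ from Table~\ref{tab:discbound}. The natural normal series is the derived series $1 \subset G' \subset G$, where $G' = \{1\} \times \DD_{11}' \cong \CC_{11}$ and $G/G' \cong \CC_2^3$. Any normal CM-field $L$ with $\Gal(L/\Q) \cong G$ then fits in a tower $\Q \subset L_1 \subset L$ with $L_1 = L^{G'}$ of degree $8$ over $\Q$ and $[L : L_1] = 11$.

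Since $\ccenter(G) = \CC_2^2$ has trivial intersection with $G' = \CC_{11}$, complex conjugation $c$ acts nontrivially on $L_1$, so $L_1$ is itself a CM-field; this is exactly the unique degree-$8$ normal CM-subfield singled out in \cite[Section 3]{Kwon2007}, whose Galois group is $\CC_2^3$ by hypothesis. Since $[L : L_1] = 11$ is odd, the second half of Theorem~\ref{thm:witness} sharpens the sieve of Section~\ref{subsection:relative_class_number_cm} to the full divisibility $h_{L_1}^- \mid h_L^-$, so the hypothesis $h_L^- = 1$ forces $h_{L_1}^- = 1$. Combined with the elementary bound $|\disc(L_1)| \leq |\disc(L)|^{1/11} < 10^{14}$, this drastically restricts $L_1$.

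The first step is therefore to enumerate the $\CC_2^3$-CM-fields $L_1$ with $h_{L_1}^- = 1$ and $|\disc(L_1)| < 10^{14}$; by the Chang--Kwon classification \cite{Chang2000} this is a short explicit list. The second step is, for each surviving $L_1$, to apply the class field theory algorithm of Theorem~\ref{thm:cft} to enumerate all cyclic degree $11$ extensions $L/L_1$ with $|\disc(L)| \leq 10^{152}$, discarding those whose absolute Galois group is not isomorphic to $G$. The group-theoretic condition translates into asking that the subgroup of the ray class group of $L_1$ cut out by $L$ be stable under $\Gal(L_1/\Q)$ and that some quotient $\CC_2$ of $\Gal(L_1/\Q) \cong \CC_2^3$ act on it by inversion, which is a standard check on Galois characters of order~$11$ of the conductor.

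I expect this enumeration to produce no candidates, which already proves the proposition. The main practical obstacle is the Kummer-theoretic step of Theorem~\ref{thm:cft}, which requires the $S$-unit group of $L_1(\zeta_{11})$, a field of degree $80$ over $\Q$; this is the only expensive piece, but it is made tractable by the rich subfield structure of $L_1(\zeta_{11})$ (containing $L_1$, $\Q(\zeta_{11})$, and their intermediate fields) via the techniques of \cite{Biasse2019}. Should any $L$ nonetheless survive the enumeration, one would conclude by exhibiting a CM-subfield $K \subseteq L$ with $h_K^- > 4$, whereupon Theorem~\ref{thm:witness} immediately gives $h_L^- > 1$.
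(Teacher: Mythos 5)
Your proposal is correct and follows essentially the same route as the paper: the proposition is proved by the computational method of Section~\ref{sec:construction} with the bound $B(88)=10^{152}$ from Table~\ref{tab:discbound}, i.e.\ building the tower through the maximal abelian ($\CC_2^3$) subfield, sieving it via Theorem~\ref{thm:witness} (here with the sharper odd-degree divisibility $h_{L_1}^-\mid h_L^-$), and extending by cyclic degree-$11$ layers via class field theory. Your group-theoretic bookkeeping (derived series, $\ccenter(G)\cap G'=1$ so $L_1$ is a CM-field, Schur--Zassenhaus forcing the extension type from the inversion action) and the fallback of exhibiting a CM-subfield with $h^->4$ all match the paper's strategy.
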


\subsection{Summary}

By putting the results of the previous paragraphs together, we obtain:

\begin{theorem}
  Assuming GRH, there are exactly 368 normal CM-fields with relative class number one and 227 normal CM-fields with class number one.
  The number of these CM-fields for a given degree and Galois group are given in Tables~\ref{tab:allthefieldsdegrees} and~\ref{tab:allthefieldsgalois}.
  The fields themselves are listed in Appendix~\ref{appendix:fields}.
\end{theorem}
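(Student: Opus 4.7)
The plan is to assemble the final count by a case analysis on the degree $d$ of the CM-field $L$, combining the cited results with the new propositions proved in this section together with Theorems~\ref{thm:64} and~\ref{thm:96}. First I would invoke the Lee--Kwon bound $d_{\,\mathrm{GRH}} \leq 96$ to reduce to finitely many degrees, and then the results recalled in Theorem~\ref{thm:structure_cnp} and Theorem~\ref{thm:bound_degree_dihedral} to cut down the non-abelian case to the explicit list
\[ d \in \{8, 12, 16, 20, 24, 32, 36, 40, 48, 54, 56, 60, 64,  72,  80, 84, 88, 96\}. \]

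Next I would split the count into the abelian and non-abelian parts. For the abelian case, the problem is entirely settled by citing Yamamura~\cite{Yamamura1994} for class number one ($172$ fields) and Chang--Kwon~\cite{Chang2000} for relative class number one ($302$ fields); I would record these as the abelian contributions to Table~\ref{tab:allthefieldsdegrees}. For each non-abelian degree I would proceed in the order of the subsections above, citing the literature for the degrees already covered there ($d\in\{8,12,20,36,54,60,72,80,84\}$ as well as the parts of $d\in\{16,24,32,40,48,56,88\}$ treated by other authors), and invoking Propositions~\ref{prop:missing:16}, \ref{prop:missing:24_1}, \ref{prop:missing:24_2}, \ref{prop:missing:32}, \ref{prop:missing:40}, \ref{prop:missing:48}, \ref{prop:missing:56}, \ref{prop:missing:72}, \ref{prop:missing:80}, \ref{prop:missing:88} for the previously open cases. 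The degrees $64$ and $96$ are then closed by Theorems~\ref{thm:64} and~\ref{thm:96}, which rule out any field with $h_L^- = 1$ in those degrees.

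Once each degree is handled, I would tabulate the fields per Galois group by reading off the identifiers in the small groups library, and verify that summing over degrees yields the totals $227$ (class number one) and $368$ (relative class number one) appearing in the statement, matching the row-$\Sigma$ of Table~\ref{tab:allthefieldsdegrees} and the totals implicit in Table~\ref{tab:allthefieldsgalois}. The explicit defining polynomials collected in Appendix~\ref{appendix:fields} together with the recomputation described in the introduction (cross-validating against the literature, correcting the typo in degree $12$ and the two spurious degree-$32$ fields noted in Remark~\ref{rem:32}) certify the list.

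The main obstacle is purely bookkeeping: making sure that every $(d, G)$ pair is accounted for exactly once, without double-counting the fields handled both by a prior reference and by one of our propositions, and checking consistency between the class number one counts and relative class number one counts (using $h_{L^+} \mid h_L$ and $h_L = h_{L^+} h_L^-$). The numerical check that the per-group counts in Table~\ref{tab:allthefieldsgalois} aggregate into the per-degree counts of Table~\ref{tab:allthefieldsdegrees} provides the final internal consistency verification.
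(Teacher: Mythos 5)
Your proposal is correct and follows essentially the same route as the paper: the paper's proof of this theorem is literally the assembly, degree by degree, of the cited literature, the new Propositions~\ref{prop:missing:16}--\ref{prop:missing:88}, and Theorems~\ref{thm:64} and~\ref{thm:96}, exactly as you describe. (Minor note: you quote $302$ abelian fields with relative class number one, consistent with Table~\ref{tab:allthefieldsdegrees}, whereas the paper's prose in the abelian subsection says $300$; the table's column sums support your figure.)
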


\section{A note on the computations}\label{sec:comp}

The main algorithmic tools for constructing relative abelian extension of number fields had already been implemented as part of~\cite{Fieker2019} in the system
\textsc{Hecke}~\cite{Hecke}.
For dealing with the various algorithmic group theoretic questions we used \textsc{GAP}~\cite{GAP4}. %
In this regard, the table of small groups as determined in~\cite{Besche2002} has been essential.
The final computation of (relative) class numbers (of subfields) have additionally been checked with \textsc{Magma}~\cite{Magma}.

In Table~\ref{tab:runtime} we give an overview of the runtime of the computation.

\begin{table}[htpb]
  \caption{Runtime}
  \label{tab:runtime}
  \begin{tabular}{|c|c|c|}
    \hline
    \multicolumn{2}{|c|}{Problem} & Runtime \\\hline
      \multirow{4}{*}{Degree 64}& Computation of Table~\ref{tab:maxab64} & 1h \\
                                & Computation of Table~\ref{tab:cmdeg64} & 160d \\
                                & Proof $h_L^{-} > 1$  & 2d \\\hline
       \multirow{3}{*}{Degree 96} & Computation of Table~\ref{tab:tr96} & 1h \\
                                  & Computation of Table~\ref{tab:cm96} & 1h \\
                                  & Proof $h_L^{-} > 1$  & 2h \\
                                 \hline
       \multirow{1}{*}{Other degrees}& Section~\ref{sec:relcmone} & 120d \\

    \hline
       \multicolumn{2}{|c|}{Total time} & 284d \\\hline
  \end{tabular}
\end{table}

 \newpage
 \appendix

 \section{The CM-fields with (relative) class number one}\label{appendix:fields}

 We provide a list of all normal CM-fields with (relative) class number one (assuming GRH).\footnote{The table of fields is also available on the homepage of the first author.}
 Note that our results imply that these fields are in principle already available from the works of other authors.
 On the other hand, in lots of cases these fields were not given as explicit as possible.
 For example, they were given as ray class fields or Hilbert class fields of other fields, or in the abelian case, the fields were specified using Dirichlet characters.

 For the readers convenience, in our table each field is specified by the minimal
 polynomial of a primitive element.
 These minimal polynomials have been obtained in the following way.
 For abelian groups, we have (re)computed all CM-fields with conductor at most 65689 and degree bounded by 24, which by~\cite[Theorem 1]{Chang2000} include all abelian CM-fields with (relative) class number one.
 For the fields of degree $\geq 10$, we have again used~\cite{Lee2006} to obtain explicit discriminant bounds for normal CM-fields with relative class number one. Using the same methods that we used in degree 64 and degree 96, we have then constructed all normal CM-fields with relative class number one.
 In the course of these calculations, with two exceptions, we could confirm previous results from the literature. In case of $\DD_6$ we found a typo in~\cite[Table 1]{Louboutin1994} and in case of degree 32 we corrected the results from~\cite{Park2007} (see Remark~\ref{rem:32}).
 The only missing Galois group is $\DD_4$, for which we took the polynomials from~\cite{Louboutin1994}.

\clearpage{}%
\renewcommand*{\arraystretch}{1.1}
\begin{landscape}
 \begin{center}
 \footnotesize

% [inline block 0: 11 envs, 80125 chars -> data_tex | \begin{tabular}{|>{\centering\arraybackslash}p{0.2cm}|c|>{\centering\arraybackslash}p{7.4cm}|c|>{\centering\arraybacksla...]

\newpage
\end{center}\end{landscape}
\clearpage{}%

\bibliographystyle{amsplain}
\bibliography{cmfields}
\end{document}